\newcommand{\figref}[1]{{Figure~\ref{#1}}}
\newcommand {\be}{\begin{equation}}
\newcommand {\bes}{\begin{displaymath}}
\newcommand {\es}{\end{displaymath}}
\newcommand {\e}{\end{equation}}
\newcommand {\bea}{\begin{eqnarray}}
\newcommand {\ea}{\end{eqnarray}}
\newtheorem{theorem}{Theorem}[section]
\newtheorem{Assumption}{Assumption}[section]
\newtheorem{remark}{Remark}[section]
\newenvironment{proof}[1][Proof]{\textbf{#1.} }{\hspace{\stretch{1}}\rule{0.5em}{0.5em}}
\newcommand{\thmref}[1]{{Theorem~\ref{#1}}}
\newcommand{\secref}[1]{{Section~\ref{#1}}}
\newcommand{\assref}[1]{{Assumption~\ref{#1}}}
\begin{document}
\begin{frontmatter}

\title{Stability of the  semi-tamed  and  tamed Euler schemes  for stochastic differential
 equations with jumps under non-global Lipschitz condition}

\author[at,atb]{Antoine Tambue}
\cortext[cor1]{Corresponding author}
\ead{antonio@aims.ac.za}
\address[at]{The African Institute for Mathematical Sciences(AIMS) of South Africa and Stellenbosh University,
6-8 Melrose Road, Muizenberg 7945, South Africa}
\address[atb]{Center for Research in Computational and Applied Mechanics (CERECAM), and Department of Mathematics and Applied Mathematics, University of Cape Town, 7701 Rondebosch, South Africa.}

\author[jd,jdm]{Jean Daniel Mukam}
\ead{jean.d.mukam@aims-senegal.org}
\address[jd]{African Institute for Mathematical Sciences(AIMS) of Senegal, KM 2, ROUTE DE JOAL, B.P. 1418. MBOUR, SENEGAL }
\address[jdm]{Technische Universit\"{a}t Chemnitz, 09126 Chemnitz, Germany}

\begin{abstract}
 Under non-global Lipschitz condition, Euler Explicit method fails to converge strongly to the exact solution, 
 while Euler implicit  method converges but requires much computational efforts. Tamed scheme was  first introduced in \cite{lamport194} to overcome this failure of  the standard explicit method.
 This  technique is extended to  SDEs driven by Poisson jump  in  \cite{antojeanst} where several schemes were analyzed.
 In this work,  we investigate their  nonlinear stability under non-global Lipschitz and  their linear stability. 
  Numerical simulations to sustain the theoretical results are also provided.
\end{abstract}

\begin{keyword}
Stochastic differential equation \sep Linear Stability \sep Exponential Stability \sep Jump processes  \sep one-sided Lipschitz.

\end{keyword}
\end{frontmatter}
\section{Introduction}
\label{intro}
In this work, we study the stability of the jump-diffusion It\^{o}'s stochastic differential equations (SDEs) of the form 
\begin{eqnarray}
\label{model}
 dX(t)=  f(X(t^{-}))dt +g(X(t^{-}))dW(t)+h(X(t^{-}))dN(t), \quad  X(0)=X_0,\;\;  t  \in [0, T],\,\, T>0.
\end{eqnarray}
Here $W_t$ is a $m$-dimensional Brownian motion, $f :\mathbb{R}^d\longrightarrow\mathbb{R}^d$, $d \in \mathbb{N}$ 
satisfies the  one-sided Lipschitz condition and the polynomial growth condition,
the  functions $g : \mathbb{R}^d \longrightarrow\mathbb{R}^{d\times m}$ and $h :\mathbb{R}^d \longrightarrow\mathbb{R}^d$ satisfy
the globally Lipschitz condition, and $N_t$ is a one dimensional poisson process with parameter $\lambda$.
The one-sided Lipschitz function $f$ can be decomposed as $f=u+v$, where the function $u : \mathbb{R}^d\longrightarrow\mathbb{R}^d$ 
is the global Lipschitz continuous part and $v : \mathbb{R}^d\longrightarrow\mathbb{R}^d$ is the non-global Lipschitz continuous part. 
Using this decomposition, we can rewrite the jump-diffusion SDEs \eqref{model} in the following equivalent form
\begin{eqnarray}
\label{model1}
dX(t)=  \left(u(X(t^{-})+ v(X(t^{-}))\right)dt +g(X(t^{-}))dW(t)+h(X(t^{-}))dN(t).
\end{eqnarray}
Equations of type \eqref{model} arise in a range of scientific, engineering and financial applications (see \cite{appf2,appf,Fima} and references therein).
The standard explicit methods for approximating SDEs of  type \eqref{model} is  the Euler-Maruyama method  and implicit schemes \cite{Desmond2,platen}. 
Their numerical analysis   have been   studied in \cite{Desmond2,Wang,desmond,platen} with implicit and explicit schemes. 
Recently it has been proved (see \cite{lamport94}) that the Euler-Maruyama method often fails to converge strongly 
to the exact solution of nonlinear SDEs of the form  \eqref{model} without jump  term  when at least one of the  functions $f$ and $g$
 grows superlinearly. To  overcome  this drawback of the Euler-Maruyama method, numerical approximation 
which computational cost is close to that of the Euler-Maruyama method and which converge strongly even in the case  the function $f$  is superlinearly growing was first introduced in
\cite{lamport194}. In our accompanied paper \cite{antojeanst}, the work in \cite{lamport194} has been extended to SDEs of type \eqref{model}
and  the strong convergence of the following numerical schemes has been investigated
\begin{equation}
 X_{n+1}=X_{n}+\dfrac{\Delta t f(X_{n})}{1+ \Delta t\Vert f(X_{n}) \Vert }+g(X_{n}) \Delta W_n +h(X_{n})\Delta N_n,
 \label{ncts}
\end{equation}
and 
\begin{eqnarray}
 Y_{n+1}=Y_{n}+u(Y_n)\Delta t+\dfrac{\Delta t v(Y_{n})}{1+ \Delta t \Vert v(Y_{n}) \Vert }+g(Y_{n}) \Delta W_n +h(Y_{n})\Delta N_n.
 \label{sts}
\end{eqnarray}
where $\Delta t=T/M$ is the  time step-size, $M\in\mathbb{N}$ is the number of time subdivisions, $\Delta W_n = W(t_{n+1})- W(t_{n})$ and $\Delta N_n = N(t_{n+1})- N(t_{n})$.
The scheme \eqref{ncts} is called  the  non compensated tamed scheme (NCTS), while scheme \eqref{sts} is  called the semi-tamed scheme.

Strong and weak convergences are not the only features of numerical techniques. Stability for SDEs is also a good feature as 
the information about  step size for which  does a particular numerical method replicate the stability properties of the exact solution is valuable. The linear stability is an extension of  the deterministic A-stability
while exponential stability can guarantee that errors introduced in one time step will decay exponentially in future time steps,  exponential
stability  also implies asymptotic stability \cite{Huang}.  By the Chebyshev inequality and the Borel--Cantelli lemma, it is well known that exponential mean
square stability implies almost sure stability \cite{Huang}.  The stability of  classical implicit and explicit methods for  \eqref{model} are well understood \cite{Desmond2,Huang,Wang,xia}. 
Although the strong convergence of the NCTS  and STS schemes given respectively by  \eqref{ncts} and \eqref{sts} have been provided in \cite{antojeanst}, 
a rigorous stability properties have not yet investigated to the best of our knowledge. The  goal of this paper is to study  the linear stability and  the exponential stability of \eqref{ncts} and \eqref{sts} 
for SDEs \eqref{model} driven by both Brownian motion and  Poisson jump.
Our study will also provide the rigorous study of  linear stabilities of schemes   \eqref{ncts} and \eqref{sts} for SDEs  without jump,
which have not yet studied  to the best of our knowledge.

The paper is organised as follows. The linear mean-square stability  and  the exponential mean-square stability of the tamed and semi-tamed schemes are investigated  
respectively in \secref{linearsta} and \secref{nlinearsta}.  \secref{simulations} presents numerical simulations to sustain the theoretical results. We also
compare  the stability behaviors of tamed  and semi-tamed schemes  with  those of backward Euler and split-step backward Euler, this comparison shows  
the good behavior of  the semi-tamed scheme and therefore confirms the previous study in \cite{lamp} for SDEs without jump.

\section{Linear mean-square stability} 
\label{linearsta}
   Throughout this work,  $(\Omega, \mathcal{F}, \mathbb{P})$ denotes a complete probability space with a filtration $(\mathcal{F}_t)_{t\geq 0}$. 
 For all $x, y\in\mathbb{R}^d$, we denote by $ \langle x, y \rangle= x_1y_1+x_2y_2+\cdots+x_dy_d$, $\|x\|= \langle x, x \rangle^{1/2}$ and $ a \wedge b =\min (a,b)$, for all $a, b\in\mathbb{R}$.

 The goal of  this section is to find the time  
 step-size  limit for which the tamed Euler scheme and the semi-tamed Euler scheme are stable
 in  the linear mean-square sense. 
 For the scalar linear test problem, the concept of A-stability of a numerical method may be interpreted as ''problem
stable $\Rightarrow$ method stable for all $\Delta t$''.
We consider the following linear test equation with real and scalar coefficients.
\begin{eqnarray}
 dX(t)=  aX(t^{-})dt +bX(t^{-})dW(t)+cX(t^{-})dN(t),  \hspace{0.5cm}
 X(0)=X_0,
 \label{ch5linearequation}
\end{eqnarray}
where $X_0$ satisfied $\mathbb{E}\|X_0\|^2<\infty$.
It is proved in \cite{Desmond2} that the exact solution of \eqref{ch5linearequation} is mean-square stable if and only if 
  \begin{eqnarray}
  \label{lstable}
  \underset{t \rightarrow \infty}{\lim} \mathbb{E}(X(t)^2)=0  \Leftrightarrow l :=2a+b^2+\lambda c(2+c)<0.
  \end{eqnarray}
  Using the discrete form  of \eqref{ch5linearequation} ,  the numerical schemes \eqref{sts} and \eqref{ncts} will be therefore mean-square stable if  $l<0$ and 
   \begin{eqnarray}
   \underset{n \rightarrow \infty}{\lim} \mathbb{E} (Y_n^2)= \underset{n \rightarrow \infty}{\lim} \mathbb{E}(X_n^2)= 0.
   \end{eqnarray}
The following result  provides the  time step-size limit for which the semi-tamed scheme (STS) \eqref{sts} is  is mean-square stable.
 \begin{theorem}
 Assume  that $l<0$, then $a+\lambda c<0$ and  the semi-tamed  scheme \eqref{sts} is mean-square stable if and only if
 \begin{eqnarray*}
 \Delta t<\dfrac{-l}{(a+\lambda c)^2}.
 \end{eqnarray*}
 \end{theorem}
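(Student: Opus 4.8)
The plan is to exploit the fact that on the scalar linear test equation \eqref{ch5linearequation} the drift $f(x)=ax$ is already globally Lipschitz, so the decomposition $f=u+v$ into a globally Lipschitz part $u$ and a superlinearly growing part $v$ degenerates to $u(x)=ax$, $v\equiv 0$. Consequently the taming term $\Delta t\, v(Y_n)/(1+\Delta t\|v(Y_n)\|)$ in \eqref{sts} vanishes identically and the semi-tamed recursion collapses to the (genuinely linear) Euler--Maruyama recursion
\begin{equation*}
Y_{n+1}=\left(1+a\Delta t+b\Delta W_n+c\Delta N_n\right)Y_n .
\end{equation*}
Thus the whole question reduces to computing a single one-step mean-square amplification factor.

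Next I would take expectations. Since $Y_n$ is $\mathcal{F}_{t_n}$-measurable while $\Delta W_n$ and $\Delta N_n$ are independent of $\mathcal{F}_{t_n}$ and of each other, squaring and taking expectations gives $\mathbb{E}(Y_{n+1}^2)=R(\Delta t)\,\mathbb{E}(Y_n^2)$ with
\begin{equation*}
R(\Delta t):=\mathbb{E}\!\left[\left(1+a\Delta t+b\Delta W_n+c\Delta N_n\right)^2\right].
\end{equation*}
Expanding the square and inserting the increment moments $\mathbb{E}(\Delta W_n)=0$, $\mathbb{E}(\Delta W_n^2)=\Delta t$, $\mathbb{E}(\Delta N_n)=\lambda\Delta t$, $\mathbb{E}(\Delta N_n^2)=\lambda\Delta t+\lambda^2\Delta t^2$, every cross term carrying $\Delta W_n$ drops out and the remaining bookkeeping yields
\begin{equation*}
R(\Delta t)=1+\big(2a+b^2+\lambda c(2+c)\big)\Delta t+(a+\lambda c)^2\Delta t^2=1+l\,\Delta t+(a+\lambda c)^2\Delta t^2 ,
\end{equation*}
the key algebraic step being $a^2+2a\lambda c+\lambda^2c^2=(a+\lambda c)^2$.

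Finally I would iterate to get $\mathbb{E}(Y_n^2)=R(\Delta t)^n\,\mathbb{E}(Y_0^2)$; since $R(\Delta t)\ge 0$ (being the expectation of a square), the scheme is mean-square stable if and only if $R(\Delta t)<1$, i.e. if and only if $l\Delta t+(a+\lambda c)^2\Delta t^2<0$, and dividing by $\Delta t>0$ gives exactly $\Delta t<-l/(a+\lambda c)^2$ (the case $\mathbb{E}(Y_0^2)=0$ being trivial). For the auxiliary assertion $a+\lambda c<0$ I would rewrite $l=2(a+\lambda c)+b^2+\lambda c^2$ and use $b^2\ge 0$, $\lambda c^2\ge 0$ together with $l<0$, which forces $a+\lambda c<0$; in particular $(a+\lambda c)^2>0$, so the division above is legitimate and the threshold is a finite positive number. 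The only point needing a little care is the first observation — that the taming nonlinearity is inactive on the linear test problem, so the recursion is exactly linear and the product formula for $\mathbb{E}(Y_n^2)$ holds with no error term; after that the proof is a routine moment computation with no genuine obstacle.
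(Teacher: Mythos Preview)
Your proof is correct and follows essentially the same route as the paper: reduce the semi-tamed scheme on the linear test equation to the Euler--Maruyama recursion, compute the one-step mean-square amplification factor $R(\Delta t)=1+l\,\Delta t+(a+\lambda c)^2\Delta t^2$, and read off the stability condition $R(\Delta t)<1$. The only cosmetic difference is that the paper rewrites the jump term in compensated form ($c\,\Delta\overline{N}_n$ with drift shifted to $a+\lambda c$) before squaring, whereas you work directly with $\Delta N_n$; the algebra of course coincides, and you additionally supply the short argument for $a+\lambda c<0$ that the paper states but does not prove.
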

 \begin{proof}
 Applying the  semi-tamed Euler scheme to \eqref{ch5linearequation} leads to
 \begin{eqnarray}
 Y_{n+1}=Y_n+aY_n\Delta t+\lambda cY_n\Delta t+bY_n\Delta W_n+cY_n\overline{N}_n.
 \label{ch5compen1}
 \end{eqnarray}
 Squaring both sides of \eqref{ch5compen1} leads to
 \begin{eqnarray}
 Y_{n+1}^2&=&Y_n^2+(a+\lambda c)^2\Delta t^2Y_n^2+b^2Y_n^2\Delta W_n^2+c^2Y_n^2\Delta\overline{N}_n^2+2(a+\lambda c) \Delta t Y_n^2+2bY_n^2\Delta W_n\nonumber\\
 &+&2cY_n^2\Delta\overline{N}_n+2b(a+\lambda c)\Delta t\Delta W_n Y_n^2+2c(a+\lambda c)Y_n^2\Delta t\Delta\overline{N}_n+2bcY_n^2\Delta W_n\Delta\overline{N}_n.
 \label{ch5compen2}
 \end{eqnarray}
 Taking expectation in both sides of \eqref{ch5compen2} and using the relations  $\mathbb{E}(\Delta W_n^2)=\Delta t$, $\mathbb{E}(\Delta\overline{N}_n^2)=\lambda \Delta t$ and  $\mathbb{E}(\Delta W_n)=\mathbb{E}(\Delta\overline{N}_n)=0$ 
 with the fact  that $\Delta W_n$ and $ \Delta\overline{N}_n$ are independents leads to
 \begin{eqnarray*}
 \mathbb{E}|Y_{n+1}|^2=(1+(a+\lambda c)^2\Delta t^2+(b^2+\lambda c^2+2a+2\lambda c)\Delta t)\mathbb{E}|Y_n|^2.
 \end{eqnarray*}
 So, the semi-tamed scheme is stable if and only if
 \begin{eqnarray*}
 1+(a+\lambda c)^2\Delta t^2+(b^2+\lambda c^2+2a+2\lambda c)\Delta t<1.
 \end{eqnarray*}
 That is 
 $\Delta t<\dfrac{-l}{(a+\lambda c)^2}$.
 \end{proof}
 
 The following result  provide the  time step-size limit for which the non compensated  tamed scheme (NCTS) \eqref{ncts} is stable.
 \begin{theorem}
 \label{thmncts}
 Assume  that $l<0$,  the tamed Euler scheme \eqref{ncts} is mean-square stable  if one   of  the  following conditions is satisfied 
 
$\bullet$ $a(1+\lambda c\Delta t)\leq 0$, $2a-l>0$ and $\Delta t<\dfrac{2a-l}{a^2+\lambda^2c^2}$.\\
$\bullet$  $a(1+\lambda c\Delta t)> 0$ and $\Delta t<\dfrac{-l}{(a+\lambda c)^2}$.
\end{theorem}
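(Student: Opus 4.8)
The plan is to follow the scheme of the semi-tamed proof above, the essential new difficulty being that the tamed drift increment is nonlinear in $X_n$. Applying \eqref{ncts} to the test equation \eqref{ch5linearequation} and writing $\Delta N_n=\Delta\overline{N}_n+\lambda\Delta t$ with $\Delta\overline{N}_n:=\Delta N_n-\lambda\Delta t$ gives
\begin{eqnarray*}
X_{n+1}=X_n\bigl(1+\phi_n+\lambda c\Delta t\bigr)+bX_n\Delta W_n+cX_n\Delta\overline{N}_n,\qquad \phi_n:=\frac{a\Delta t}{1+\Delta t\,|a|\,|X_n|},
\end{eqnarray*}
where $\phi_n$ is $\mathcal{F}_{t_n}$-measurable, has the sign of $a$, and satisfies $|\phi_n|\le|a|\Delta t$. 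I would square this identity, take expectations, and use $\mathbb{E}(\Delta W_n^2)=\Delta t$, $\mathbb{E}(\Delta\overline{N}_n^2)=\lambda\Delta t$, $\mathbb{E}(\Delta W_n)=\mathbb{E}(\Delta\overline{N}_n)=0$ together with the independence of $\Delta W_n$, $\Delta\overline{N}_n$ and $\mathcal{F}_{t_n}$; exactly as in the proof above all mixed terms drop, leaving
\begin{eqnarray*}
\mathbb{E}|X_{n+1}|^2=\mathbb{E}\bigl[X_n^2(1+\phi_n+\lambda c\Delta t)^2\bigr]+(b^2+\lambda c^2)\Delta t\,\mathbb{E}|X_n|^2 .
\end{eqnarray*}
Since the denominator of $\phi_n$ depends on $X_n$, the first term cannot be factored out as in the semi-tamed case; instead I would bound it to obtain $\mathbb{E}|X_{n+1}|^2\le K\,\mathbb{E}|X_n|^2$, after which $K<1$ forces $\mathbb{E}|X_n|^2\le K^n\mathbb{E}|X_0|^2\to0$, i.e. mean-square stability.

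The case split is dictated by the sign of the cross term in $(1+\phi_n+\lambda c\Delta t)^2=(1+\lambda c\Delta t)^2+2(1+\lambda c\Delta t)\phi_n+\phi_n^2$: since $X_n^2\phi_n$ has the sign of $a$, the quantity $2(1+\lambda c\Delta t)X_n^2\phi_n$ has the sign of $a(1+\lambda c\Delta t)$. In the first case $a(1+\lambda c\Delta t)\le0$, this cross term is nonpositive pointwise, so it may be discarded, and $\phi_n^2\le a^2\Delta t^2$ gives, after expanding $(1+\lambda c\Delta t)^2$ and recalling $l=2a+b^2+2\lambda c+\lambda c^2$,
\begin{eqnarray*}
K=1+(a^2+\lambda^2c^2)\Delta t^2+(l-2a)\Delta t ;
\end{eqnarray*}
thus $K<1$ is equivalent to $(a^2+\lambda^2c^2)\Delta t<2a-l$, which requires $2a-l>0$ and yields $\Delta t<(2a-l)/(a^2+\lambda^2c^2)$ (the denominator is positive, since $l<0$ excludes $a=c=0$). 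In the second case $a(1+\lambda c\Delta t)>0$, so $1+\lambda c\Delta t$ and $\phi_n$ share the sign of $a$, whence $|1+\phi_n+\lambda c\Delta t|\le|1+\lambda c\Delta t|+|a|\Delta t$; moreover $|1+\lambda c\Delta t|\,|a|=(1+\lambda c\Delta t)a$, so $(1+\phi_n+\lambda c\Delta t)^2\le(1+(a+\lambda c)\Delta t)^2$, giving
\begin{eqnarray*}
K=(1+(a+\lambda c)\Delta t)^2+(b^2+\lambda c^2)\Delta t=1+(a+\lambda c)^2\Delta t^2+l\Delta t ,
\end{eqnarray*}
and, since $l=2(a+\lambda c)+b^2+\lambda c^2<0$ forces $a+\lambda c<0$, the condition $K<1$ becomes $\Delta t<-l/(a+\lambda c)^2$.

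The main obstacle is precisely the loss of an exact one-step recursion caused by the nonlinearity of the taming: one must replace equality by inequality and choose, for each regime, the sharp elementary estimate of $(1+\phi_n+\lambda c\Delta t)^2$ — discard the favourable cross term in the first case, apply the triangle inequality in the second — tuned so that the resulting quadratic-in-$\Delta t$ stability condition collapses to the stated thresholds expressed through $l$, $a^2+\lambda^2c^2$ and $(a+\lambda c)^2$. Everything else is the same routine bookkeeping with the increments $\Delta W_n$ and $\Delta\overline{N}_n$ as in the semi-tamed proof.
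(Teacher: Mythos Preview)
Your proof is correct and follows essentially the same approach as the paper: square the one-step recursion, bound the tamed drift by its untamed counterpart via $\phi_n^2\le a^2\Delta t^2$, and split cases on the sign of $a(1+\lambda c\Delta t)$ to handle the cross term, arriving at the same two quadratic-in-$\Delta t$ conditions. The only cosmetic difference is that you first pass to the compensated increment $\Delta\overline{N}_n$, which yields an exact identity for $\mathbb{E}|X_{n+1}|^2$ before any bounding, whereas the paper works directly with $\Delta N_n$ and bounds the squared tamed term before taking expectations; the resulting estimates and thresholds are identical.
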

\begin{proof}
Applying the tamed Euler scheme \eqref{ncts} to equation \eqref{ch5linearequation} leads to 
\begin{eqnarray}
X_{n+1}=X_n+\dfrac{a X_n\Delta t}{1+\Delta t|aX_n|}+bX_n\Delta W_n+cX_n\Delta N_n.
\label{ch5eq1}
\end{eqnarray}
By squaring both sides of \eqref{ch5eq1} leads to 
\begin{eqnarray*}
X_{n+1}^2&=&X_n^2+\dfrac{a^2X^2_n\Delta t^2}{(1+\Delta t|a X_n|)^2}+b^2 X_n^2\Delta W_n^2+c^2 X_n^2\Delta N_n^2+\dfrac{2a X_n^2\Delta t}{1+\Delta t|a X_n|}+2b X_n^2\Delta W_n\\
&+&2cX_n^2\Delta N_n+\dfrac{2abX_n^2}{1+\Delta t|aX_n|}\Delta W_n+\dfrac{2acX_n^2\Delta t}{1+\Delta t|a X_n|}\Delta N_n+2bcY_n^2\Delta W_n\Delta N_n.
\end{eqnarray*}
Using the inequality $ \dfrac{a^2\Delta t^2}{1+\Delta t|a X_n|}<a^2\Delta t^2$, the previous equality becomes 
\begin{eqnarray*}
X_{n+1}^2&\leq &X_n^2+a^2X^2_n\Delta t^2+b^2 X_n^2\Delta W_n^2+c^2 X_n^2\Delta N_n^2+\dfrac{2a X_n^2\Delta t}{1+\Delta t|a X_n|}+2bX_n^2\Delta W_n\\
&+&2c X_n^2\Delta N_n+\dfrac{2ab X_n^2}{1+\Delta t|aX_n|}\Delta W_n+\dfrac{2acX_n^2\Delta t}{1+\Delta t|aX_n|}\Delta N_n+2bcX_n^2\Delta W_n\Delta N_n.
\end{eqnarray*}
Taking expectation in both sides of the previous equality and using independence and  the fact that $\mathbb{E}(\Delta W_n)=0$, $\mathbb{E}(\Delta W_n^2)=\Delta t$, $\mathbb{E}(\Delta N_n)=\lambda\Delta t$, $\mathbb{E}(\Delta N_n^2)=\lambda \Delta t+\lambda^2\Delta t^2$ leads to :
\begin{eqnarray}
\mathbb{E}|X_{n+1}|^2&\leq& \left[1+a^2\Delta t^2+b^2\Delta t+\lambda^2c^2\Delta t^2+(2+ c)\lambda c\Delta t\right]\mathbb{E}|X_n|^2\nonumber\\
&+&\mathbb{E}\left(\dfrac{2aX^2_n\Delta t(1+\lambda c\Delta t)}{1+\Delta t|aX_n|}\right).
\label{ch5eq2}
\end{eqnarray}

$\bullet$  If $a(1+\lambda c\Delta t)\leq 0$, it follows from \eqref{ch5eq2} that
\begin{eqnarray*}
\mathbb{E}|X_{n+1}|^2\leq \{1+(a^2+\lambda^2c^2)\Delta t^2+[b^2+\lambda c(2+c)]\Delta t\}\mathbb{E}|X_n|^2.
\end{eqnarray*}
Therefore, the numerical solution is stable if 
\begin{eqnarray*}
1+(a^2+\lambda^2c^2)\Delta t^2+[b^2+\lambda c(2+c)]\Delta t<1.
\end{eqnarray*}
That is  $\Delta t<\dfrac{2a-l}{a^2+\lambda^2c^2}$.

$\bullet$  If $a(1+\lambda c\Delta t)> 0$, using the fact that $\dfrac{2a X_n^2\Delta t(1+\lambda c\Delta t)}{1+\Delta t|aX_n|}< 2aX_n^2\Delta t(1+\lambda c\Delta t)$, inequality \eqref{ch5eq2} becomes
\begin{eqnarray}
\mathbb{E}|X_{n+1}|^2\leq\left[1+a^2\Delta t^2+b^2\Delta t+\lambda^2c^2\Delta t^2+2\lambda ac\Delta t^2+(2+ c)\lambda c\Delta t+2a\Delta t\right]\mathbb{E}|X_n|^2.
\label{ch5eq3}
\end{eqnarray}
Therefore, it follows from \eqref{ch5eq3} that the numerical solution is stable if
 $1+a^2\Delta t^2+b^2\Delta t+\lambda^2c^2\Delta t^2+2\lambda ac\Delta t^2+(2+ c)\lambda c\Delta t+2a\Delta t<1$. That is $\Delta t<\dfrac{-l}{(a+\lambda c)^2}$.
\end{proof}
 \begin{remark}
In \thmref{thmncts}, we can  easily check that if $l<0$, we have:
\begin{eqnarray}
\left\lbrace \begin{array}{l}
 a(1+\lambda c\Delta t)\leq0,\\
 2a-l>0 \\
 \Delta t<\dfrac{2a-l}{a^2+\lambda^2c^2}\\
 \end{array} \right. 
 \Leftrightarrow 
 \left\lbrace \begin{array}{l}
   a \in (l/2,0], c\geq 0,\\
   \Delta t <\dfrac{2a-l}{a^2+\lambda^2 c^2}\\
 \end{array} \right.
 \bigcup
 \left\lbrace \begin{array}{l}
   a \in (l/2,0), c<0,\\
   \Delta t <\dfrac{2a-l}{a^2+\lambda^2 c^2} \\
   \Delta t\leq \dfrac{-1}{ \lambda c} \\
 \end{array} \right.\\
 \bigcup
 \left\lbrace \begin{array}{l}
   a>0, c<0 \\
   \Delta t <\dfrac{2a-l}{a^2+\lambda^2 c^2}  \\
  \Delta t \geq  \dfrac{-1}{ \lambda c}
 \end{array} \right. 
\end{eqnarray}
\begin{eqnarray}
\left\lbrace \begin{array}{l}
 a(1+\lambda c\Delta t)>0,\\
 \Delta t<\dfrac{-l}{(a+\lambda c)^2}\\
 \end{array} \right. 
 \Leftrightarrow 
 \left\lbrace \begin{array}{l}
   a >0, c>0,\\
   \Delta t < \dfrac{-l}{(a+\lambda c)^2}\\
 \end{array} \right.
 \bigcup
 \left\lbrace \begin{array}{l}
   a >0, c<0,\\
   \Delta t <\dfrac{-l}{(a+\lambda c)^2} \wedge \dfrac{-1}{ \lambda c} \\
 \end{array} \right.\\
 \bigcup
 \left\lbrace \begin{array}{l}
   a<0, c<0 \\
   \Delta t <\dfrac{-l}{(a+\lambda c)^2}  \\
  \Delta t >  \dfrac{-1}{ \lambda c}
 \end{array} \right. 
\end{eqnarray}
 \end{remark}

\section{Nonlinear mean-square stability}
\label{nlinearsta}
In this section, we focus on the  exponential mean-square stability of
the approximation \eqref{sts}. 
We follow closely \cite{lamp,Desmond2} and assume that $f(0)=g(0)=h(0)=0$  and $\mathbb{E}\Vert X_0 \Vert^2<\infty$.
It is proved in \cite{Desmond2} that under the following conditions, 
\begin{eqnarray}
\label{l1}
\langle x-y, f(x)-f(y)\rangle&\leq& \mu\|x-y\|^2,\\
\label{l2}
\|g(x)-g(y)\|^2&\leq& \sigma \|x-y\|^2,\\
\label{l3}
\|h(x)-h(y)\|^2 &\leq & \gamma \|x-y\|^2,
\end{eqnarray}
 for all $x,y\in\mathbb{R}^d$, where $\mu$, $\sigma$ and $ \gamma$ are constants, the exact solution of SDE \eqref{model} is  nonlinear mean-square stable  
 if $\alpha :=2\mu+\sigma+\lambda  \sqrt{\gamma}(\sqrt{\gamma}+2)<0$.
  Indeed under the above assumptions,  we have \cite[Theorem 4]{Desmond2}
 \begin{eqnarray*}
\mathbb{E}\Vert X(t) \Vert^2 \leq \mathbb{E}\Vert X_0 \Vert^2 e^{\alpha t}.
 \end{eqnarray*}
 So, if $\alpha :=2\mu+\sigma+\lambda  \sqrt{\gamma}(\sqrt{\gamma}+2)<0$ we have $ \underset { t \rightarrow \infty}{\lim} \mathbb{E}\Vert X(t) \Vert^2=0$ and  the exact solution $X$ is exponentially mean-square stable.
 In the sequel of  this section, we will  use some weak assumptions, which of courses imply that the conditions \eqref{l1}-\eqref{l3} hold.
 
  In order to study the nonlinear stability of  the semi-tamed scheme (STS), we  make also the following assumptions
  \begin{Assumption}
\label{ch5assumption2}
 There exist some  positive constants $\rho$, $\beta$,$\overline{\beta}$, $K$, $C$, $\theta$ and $a>1$  such that
 \begin{eqnarray*}
 \langle x-y, u(x)-u(y)\rangle\leq -\rho\|x-y\|^2,\hspace{1cm} \|u(x)-u(y)\|\leq K\|x-y\|,\\
 \langle x-y, v(x)-v(y)\rangle\leq-\beta\|x-y\|^{a+1}, \hspace{1.5cm} \|v(x)\|\leq \overline{\beta}\|x\|^a,\\
 \|g(x)-g(y)\|\leq\theta\|x-y\|,\hspace{2cm} \|h(x)-h(y)\|\leq C\|x-y\|.
 \end{eqnarray*}
 \end{Assumption}
We denote by $\alpha_1 =-2\rho+\theta^2+\lambda  C(C+2)$ and  we will always assume that $\alpha_1<0$  to ensure  the stability of  the exact solution.
 The nonlinear stability of STS scheme is given in the following theorem.
  \begin{theorem}
 \label{nt1}
 Under Assumptions \ref{ch5assumption2} and the further hypothesis $2\beta-\overline{\beta}>0$, 
 for any stepsize $\Delta t<\dfrac{-\alpha_1}{(K+\lambda C)^2}\wedge\dfrac{2\beta}{[2(K+\lambda C)+\overline{\beta}]\overline{\beta}}\wedge\dfrac{2\beta-\overline{\beta}}{2(K+\lambda C)\overline{\beta}}$,
  there exists  a constant  $\gamma=\gamma (\Delta t) >0$ such that
 \begin{eqnarray*}
 \mathbb{E}\|X_n\|^2\leq \mathbb{E}\|X_0\|^2 e^{-\gamma\, t_n}, \,\, t_n=n\,\Delta t,\qquad \underset{ \Delta t \rightarrow 0}{\lim}\gamma(\Delta t)= -\alpha_1,
 \end{eqnarray*}  
 and  the  numerical solution \eqref{sts} is exponentiallly mean-square stable.
 \end{theorem}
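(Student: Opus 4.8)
The plan is to mimic the linear-stability computation of Theorem~1.1, but carry the nonlinearities through carefully using Assumption~\ref{ch5assumption2}. First I would apply the semi-tamed scheme \eqref{sts} to the model, introduce the compensated Poisson increment $\overline{N}_n = \Delta N_n - \lambda\Delta t$, and write $X_{n+1} = X_n + u(X_n)\Delta t + \frac{\Delta t\, v(X_n)}{1+\Delta t\|v(X_n)\|} + \lambda h(X_n)\Delta t + g(X_n)\Delta W_n + h(X_n)\overline{N}_n$. Squaring (i.e.\ taking $\|\cdot\|^2$) and then taking conditional expectation $\mathbb{E}[\,\cdot\mid\mathcal{F}_{t_n}]$, the cross terms containing $\Delta W_n$ and $\overline{N}_n$ at first order vanish (mean zero, independence, $\mathcal{F}_{t_n}$-measurability of $X_n$), so we are left with $\mathbb{E}\|X_{n+1}\|^2 \le \mathbb{E}\|X_n\|^2 + 2\Delta t\,\mathbb{E}\langle X_n, u(X_n)\rangle + \frac{2\Delta t}{1+\Delta t\|v(X_n)\|}\mathbb{E}\langle X_n, v(X_n)\rangle + \Delta t^2\,\mathbb{E}\big\| u(X_n) + \tfrac{v(X_n)}{1+\Delta t\|v(X_n)\|} + \lambda h(X_n)\big\|^2 + \Delta t\,\mathbb{E}\|g(X_n)\|^2 + \lambda\Delta t\,\mathbb{E}\|h(X_n)\|^2$, plus the $2\lambda\Delta t^2$-type cross term $2\lambda\Delta t^2\,\mathbb{E}\langle u(X_n)+\tfrac{v(X_n)}{1+\Delta t\|v(X_n)\|}, h(X_n)\rangle$. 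Using $f(0)=g(0)=h(0)=0$, the bounds from Assumption~\ref{ch5assumption2} become sign-definite: $\langle x,u(x)\rangle\le-\rho\|x\|^2$, $\langle x,v(x)\rangle\le-\beta\|x\|^{a+1}$, $\|u(x)\|\le K\|x\|$, $\|v(x)\|\le\overline\beta\|x\|^a$, $\|g(x)\|\le\theta\|x\|$, $\|h(x)\|\le C\|x\|$.

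Next I would collect the terms that are proportional to $\mathbb{E}\|X_n\|^2$ versus the "superlinear" terms involving $\|X_n\|^{a+1}$ and $\|X_n\|^{2a}$. The key cancellation is that $-2\beta\Delta t\|X_n\|^{a+1}/(1+\Delta t\|v(X_n)\|)$ (negative, from the one-sided bound on $v$) must dominate the positive superlinear contributions coming from the $\Delta t^2\|v/(1+\Delta t\|v\|)\|^2$ term, the cross term $2\Delta t^2 \langle u, v/(1+\dots)\rangle$, and the jump cross term $2\lambda\Delta t^2\langle v/(1+\dots), h\rangle$. Using $\|v(X_n)\| \le \overline\beta\|X_n\|^a$ and the elementary bound $\frac{\overline\beta\|X_n\|^a}{1+\Delta t\|v(X_n)\|} \cdot \Delta t \le \frac{\overline\beta\|X_n\|^a \Delta t}{1+\Delta t\overline\beta\|X_n\|^a}$, together with $\frac{1}{1+\Delta t\|v\|}\le 1$, one bounds each positive superlinear term by a constant times $\Delta t^2\|X_n\|^{2a}$ or $\Delta t^2\|X_n\|^{1+a}$, and then shows that the three stepsize thresholds $\Delta t < \frac{2\beta}{[2(K+\lambda C)+\overline\beta]\overline\beta}$ and $\Delta t < \frac{2\beta-\overline\beta}{2(K+\lambda C)\overline\beta}$ are exactly what is needed so that the negative term $\frac{-2\beta\Delta t\|X_n\|^{a+1}}{1+\Delta t\|v(X_n)\|}$ absorbs all of them; i.e.\ the net coefficient of every $\|X_n\|$-power strictly above $2$ is $\le 0$. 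This is the step I expect to be the main obstacle: juggling the taming denominator $1+\Delta t\|v(X_n)\|$ so that the superlinear remainder is controlled uniformly, and verifying that the stated thresholds are sharp enough — it requires splitting according to whether $\Delta t\|v(X_n)\|$ is large or small, or cleverly keeping the denominator, rather than crudely bounding it by $1$.

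Once the superlinear terms are killed, what survives is $\mathbb{E}\|X_{n+1}\|^2 \le \big(1 + (K+\lambda C)^2\Delta t^2 + \alpha_1\Delta t\big)\,\mathbb{E}\|X_n\|^2$, where $\alpha_1 = -2\rho + \theta^2 + \lambda C(C+2)$ gathers the linear-order-$\Delta t$ terms (noting $2\lambda\Delta t^2\langle u,h\rangle$ and $\lambda^2\Delta t^2\|h\|^2$-type pieces fold into the $(K+\lambda C)^2\Delta t^2$ bound via $\|u(x)+\lambda h(x)\|\le(K+\lambda C)\|x\|$). Since $\alpha_1<0$ and $\Delta t<\frac{-\alpha_1}{(K+\lambda C)^2}$, the contraction factor $q(\Delta t):=1+(K+\lambda C)^2\Delta t^2+\alpha_1\Delta t$ lies in $(0,1)$. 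Iterating gives $\mathbb{E}\|X_n\|^2 \le q(\Delta t)^n\,\mathbb{E}\|X_0\|^2 = e^{n\ln q(\Delta t)}\mathbb{E}\|X_0\|^2$, so setting $\gamma(\Delta t) := -\frac{\ln q(\Delta t)}{\Delta t} > 0$ yields $\mathbb{E}\|X_n\|^2 \le \mathbb{E}\|X_0\|^2 e^{-\gamma(\Delta t)\,t_n}$. Finally, a Taylor expansion $\ln q(\Delta t) = \alpha_1\Delta t + O(\Delta t^2)$ gives $\lim_{\Delta t\to0}\gamma(\Delta t) = -\alpha_1$, and $\mathbb{E}\|X_n\|^2\to0$ as $n\to\infty$, which is exponential mean-square stability of \eqref{sts}. \hfill$\square$
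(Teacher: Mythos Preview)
Your proposal follows essentially the same route as the paper: rewrite the scheme with $u_\lambda = u+\lambda h$, expand $\|Y_{n+1}\|^2$, take expectation (so the $\Delta W_n$ and $\Delta\overline N_n$ cross terms vanish), kill all superlinear contributions using the stepsize thresholds, and arrive at the contraction $\mathbb{E}\|Y_{n+1}\|^2\le q(\Delta t)\,\mathbb{E}\|Y_n\|^2$ with $q(\Delta t)=1+(K+\lambda C)^2\Delta t^2+\alpha_1\Delta t$, whence $\gamma(\Delta t)=-\ln q(\Delta t)/\Delta t\to -\alpha_1$. Your final two paragraphs match the paper almost line for line.

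The one place where your outline is imprecise is the superlinear cancellation, and the split you propose (on whether $\Delta t\|v(X_n)\|$ is large or small) is not the one that produces the two stated thresholds. The paper splits pathwise on $\Omega_n=\{\|Y_n\|>1\}$ versus $\Omega_n^c$, and keeps the taming denominator $1+\Delta t\|v(Y_n)\|$ on \emph{every} superlinear piece so they can be compared directly with the negative term $\dfrac{-2\beta\Delta t\|Y_n\|^{a+1}}{1+\Delta t\|v(Y_n)\|}$. On $\Omega_n$ one bounds
\[
\frac{\Delta t^2\|v(Y_n)\|^2}{(1+\Delta t\|v(Y_n)\|)^2}\le\frac{\Delta t\|v(Y_n)\|}{1+\Delta t\|v(Y_n)\|}\le\frac{\overline\beta\Delta t\|Y_n\|^{a}}{1+\Delta t\|v(Y_n)\|}\le\frac{\overline\beta\Delta t\|Y_n\|^{a+1}}{1+\Delta t\|v(Y_n)\|},
\]
the last inequality using $\|Y_n\|>1$; together with the cross-term bound $\dfrac{2(K+\lambda C)\overline\beta\Delta t^2\|Y_n\|^{a+1}}{1+\Delta t\|v(Y_n)\|}$ this yields the threshold $\Delta t<\dfrac{2\beta-\overline\beta}{2(K+\lambda C)\overline\beta}$ (and explains why the hypothesis $2\beta-\overline\beta>0$ is needed). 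On $\Omega_n^c$ one instead bounds
\[
\frac{\Delta t^2\|v(Y_n)\|^2}{(1+\Delta t\|v(Y_n)\|)^2}\le\frac{\overline\beta^2\Delta t^2\|Y_n\|^{2a}}{1+\Delta t\|v(Y_n)\|}\le\frac{\overline\beta^2\Delta t^2\|Y_n\|^{a+1}}{1+\Delta t\|v(Y_n)\|},
\]
now using $\|Y_n\|\le1$ and $2a\ge a+1$; this produces the other threshold $\Delta t<\dfrac{2\beta}{[2(K+\lambda C)+\overline\beta]\overline\beta}$. With this split in place (rather than one on $\Delta t\|v\|$), your argument goes through exactly as in the paper. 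Incidentally, the inequality you wrote, $\dfrac{\overline\beta\|X_n\|^a\Delta t}{1+\Delta t\|v(X_n)\|}\le\dfrac{\overline\beta\|X_n\|^a\Delta t}{1+\Delta t\overline\beta\|X_n\|^a}$, goes the wrong way since $\|v(X_n)\|\le\overline\beta\|X_n\|^a$ makes the left denominator smaller; but this bound is not actually needed once you keep the common denominator as above.
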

 \begin{proof}
 The numerical solution \eqref{sts} is given by 
 \begin{eqnarray*}
 Y_{n+1}=Y_n+\Delta tu_{\lambda}(Y_n)+\dfrac{\Delta tv(Y_n)}{1+\Delta t \|v(Y_n)\|}+g(Y_n)\Delta W_n+h(Y_n)\Delta\overline{N}_n,
 \end{eqnarray*}
  where $u_{\lambda}=u+\lambda h$.\\
 Taking the inner product in both sides  of the previous equation leads to 
 \begin{eqnarray}
 \|Y_{n+1}\|^2&=&\|Y_n\|^2+\Delta t^2\|u_{\lambda}(Y_n)\|^2+\dfrac{\Delta t^2 \|v(Y_n)\|^2}{\left(1+\Delta t\|v(Y_n)\|\right)^2}+\|g(Y_n)\|^2 \|\Delta W_n\|^2+\|h(Y_n)\|^2|\Delta \overline{N}_n |^2\nonumber\\
 &+&2\Delta t\langle Y_n, u_{\lambda}(Y_n)\rangle+2\Delta t\left\langle Y_n, \dfrac{v(Y_n)}{1+\Delta t\|v(Y_n)\|}\right\rangle+2\langle Y_n, g(Y_n)\Delta W_n\rangle\nonumber\\
 &+&2\langle Y_n, h(Y_n)\Delta\overline{N}_n\rangle+2\Delta t^2\left\langle u_{\lambda}(Y_n), \dfrac{v(Y_n)}{1+\Delta t||v(Y_n)||}\right\rangle\nonumber\\
 &+&2\Delta t\langle u_{\lambda}(Y_n), g(Y_n)\Delta W_n\rangle+2\Delta t\langle u_{\lambda}(Y_n), h(Y_n)\Delta\overline{N}_n\rangle\nonumber\\
 &+&2\Delta t\left\langle\dfrac{v(Y_n)}{1+\Delta t\|v(Y_n)\|}, g(Y_n)\Delta W_n\right\rangle+2\Delta t\left\langle\dfrac{v(Y_n)}{1+\Delta t\|v(Y_n)\|}, h(Y_n)\Delta\overline{N}_n\right\rangle\nonumber\\
 &+&2\langle g(Y_n)\Delta W_n, h(Y_n)\Delta\overline{N}_n\rangle.
 \label{ch5meansemi1}
 \end{eqnarray}
 Using Assumptions \ref{ch5assumption2}, it follows that 
 \begin{eqnarray}
 2\Delta t\left\langle Y_n, \dfrac{v(Y_n)}{1+\Delta \|v(Y_n)\|}\right\rangle\leq\dfrac{-2\beta\Delta t\|Y_n\|^{a+1}}{1+\Delta t\|v(Y_n)\|}
 \label{ch5meansemi2}
 \end{eqnarray}
 \begin{eqnarray}
 2\Delta t^2\left<u_{\lambda}(Y_n), \dfrac{v(Y_n)}{1+\Delta t\|v(Y_n)\|}\right>&\leq &\dfrac{2\Delta t^2\|u_{\lambda}(Y_n)\|\|v(Y_n)\|}{1+\Delta t\|v(Y_n\|}\nonumber\\
 &\leq&\dfrac{2\Delta t^2(K+\lambda C)\overline{\beta}\|Y_n\|^{a+1}}{1+\Delta t\|v(Y_n)\|}.
 \label{ch5meansemi3}
 \end{eqnarray}
 Set $\Omega_n=\{\omega\in\Omega : \|Y_n\|>1\}$. 
 
$\bullet$ On $\Omega_n$ we have
 \begin{eqnarray}
 \dfrac{\Delta t^2 \|v(Y_n)\|^2}{\left(1+\Delta t\|v(Y_n)\|\right)^2}\leq\dfrac{\Delta t\|v(Y_n)\|}{1+\Delta t\|v(Y_n)\|}\leq\dfrac{\overline{\beta}\Delta t\|Y_n\|^{a+1}}{1+\Delta t\|v(Y_n)\|}.
 \label{ch5meansemi4}
 \end{eqnarray}
 Therefore using \eqref{ch5meansemi2}, \eqref{ch5meansemi3} and \eqref{ch5meansemi4} in \eqref{ch5meansemi1} yields
\begin{eqnarray}
 \|Y_{n+1}\|^2& \leq &\|Y_n\|^2+\Delta t^2 \|u_{\lambda}(Y_n)\|^2+ \|g(Y_n)\|^2\|\Delta W_n\|^2+\|h(Y_n)\|^2|\Delta\overline{N}_n|^2\nonumber\\
 &+&2\Delta t\langle Y_n, u_{\lambda}(Y_n)\rangle+2\langle Y_n, g(Y_n)\Delta W_n\rangle
 +2\langle Y_n, h(Y_n)\Delta\overline{N}_n\rangle \nonumber\\
 &+&2\Delta t\langle u_{\lambda}(Y_n), g(Y_n)\Delta W_n\rangle+2\Delta t\langle u_{\lambda}(Y_n), h(Y_n)\Delta\overline{N}_n\rangle\nonumber\\
 &+&2\Delta t\left\langle\dfrac{v(Y_n)}{1+\Delta t\|v(Y_n)\|}, g(Y_n)\Delta W_n\right\rangle+2\Delta t\left\langle\dfrac{v(Y_n)}{1+\Delta t\|v(Y_n)\|}, h(Y_n)\Delta\overline{N}_n\right\rangle\nonumber\\
 &+&2\langle g(Y_n)\Delta W_n, h(Y_n)\Delta\overline{N}_n\rangle+\dfrac{\left[-2\beta\Delta t+2(K+\lambda c)\overline{\beta}\Delta t^2+\overline{\beta}\Delta t\right]\|Y_n\|^{a+1}}{1+\Delta t\|v(Y_n)\|}.
 \label{ch5meansemi4a}
 \end{eqnarray} 
 For $\Delta t<\dfrac{2\beta-\overline{\beta}}{2(K+\lambda C)\overline{\beta}}$, which is equivalent  to $-2\beta\Delta t+2(K+\lambda C)\overline{\beta}\Delta t^2+\overline{\beta}\Delta t<0$,  \eqref{ch5meansemi4a} becomes
 \begin{eqnarray}
 \|Y_{n+1}\|^2&\leq & \|Y_n\|^2+\Delta t^2\|u_{\lambda}(Y_n)\|^2+2\Delta t\langle Y_n, u_{\lambda}(Y_n)\rangle+\|g(Y_n)\|^2\|\Delta W_n\|^2\nonumber\\
 &+&\|h(Y_n)\|^2 |\Delta\overline{N}_n|^2+2\langle Y_n,g(Y_n)\Delta W_n\rangle+2\langle Y_n, h(Y_n)\Delta\overline{N}_n\rangle\nonumber\\
 &+&2\Delta t\langle u_{\lambda}(Y_n), g(Y_n)\Delta W_n\rangle+2\Delta t\langle u_{\lambda}(Y_n), h(Y_n)\Delta\overline{N}_n\rangle\nonumber\\
 &+&2\Delta t\left\langle \dfrac{v(Y_n)}{1+\Delta t \|v(Y_n)\|}, g(Y_n)\Delta W_n\right\rangle+2\Delta t\left\langle\dfrac{v(Y_n)}{1+\Delta t\|v(Y_n)\|}, h(Y_n)\Delta\overline{N}_n\right\rangle\nonumber\\
 &+&2\langle g(Y_n)\Delta W_n, h(Y_n)\Delta\overline{N}_n\rangle.
 \label{ch5meansemi4b}
 \end{eqnarray}
 
 $\bullet$ On $\Omega_n^c$ we have 
 \begin{eqnarray}
 \dfrac{\Delta t^2 \|v(Y_n)\|^2}{\left(1+\Delta t \|v(Y_n)\|\right)^2}\leq\dfrac{\Delta t^2\|v(Y_n)\|^2}{1+\Delta t\|v(Y_n)\|}\leq\dfrac{\overline{\beta}^2\Delta t^2\|Y_n\|^{2a}}{1+\Delta t
 \|v(Y_n)\|}\leq\dfrac{\overline{\beta}^2\Delta t^2 \|Y_n\|^{a+1}}{1+\Delta t\|v(Y_n)\|}.
 \label{ch5meansemi5}
 \end{eqnarray}
 Therefore, using \eqref{ch5meansemi2}, \eqref{ch5meansemi3} and \eqref{ch5meansemi5} in \eqref{ch5meansemi1} yields
  \begin{eqnarray}
 \|Y_{n+1}\|^2&\leq &\|Y_n\|^2+\Delta t^2 \|u_{\lambda}(Y_n)\|^2+2\Delta t\langle Y_n, u_{\lambda}(Y_n)\rangle+\|g(Y_n)\|^2 \|\Delta W_n\|^2\nonumber\\
 &+&\|h(Y_n)\|^2 |\Delta\overline{N}_n|^2+2\langle Y_n,g(Y_n)\Delta W_n\rangle+2\langle Y_n, h(Y_n)\Delta\overline{N}_n\rangle\nonumber\\
 &+&2\Delta t\langle u_{\lambda}(Y_n), g(Y_n)\Delta W_n\rangle+2\Delta t\langle u_{\lambda}(Y_n), h(Y_n)\Delta\overline{N}_n\rangle\nonumber\\
 &+&2\Delta t\left\langle \dfrac{v(Y_n)}{1+\Delta t\|v(Y_n)\|}, g(Y_n)\Delta W_n\right\rangle+2\Delta t\left\langle\dfrac{v(Y_n)}{1+\Delta t\|v(Y_n)\|}, h(Y_n)\Delta\overline{N}_n\right\rangle\nonumber\\
 &+&2\langle g(Y_n)\Delta W_n, h(Y_n)\Delta\overline{N}_n\rangle+\dfrac{\left[-2\beta\Delta t+2(K+\lambda c)\overline{\beta}\Delta t^2+\overline{\beta}^2\Delta t^2\right]\|Y_n\|^{a+1}}{1+\Delta t  \|v(Y_n)\|}.
 \label{ch5meansemi5a}
 \end{eqnarray}
 For  $\Delta t<\dfrac{2\beta}{[2(K+\lambda C)+\overline{\beta}]\overline{\beta}}$, which is equivalent to $-2\beta\Delta t+2(K+\lambda C)\overline{\beta}\Delta t^2+\overline{\beta}^2\Delta t^2<0$,
  \eqref{ch5meansemi5a} becomes 
 \begin{eqnarray}
 \|Y_{n+1}\|^2&\leq &\|Y_n\|^2+\Delta t^2\|u_{\lambda}(Y_n)\|^2+2\Delta t\langle Y_n, u_{\lambda}(Y_n)\rangle+\|g(Y_n)\|^2\|\Delta W_n\|^2\nonumber\\
 &+&\|h(Y_n)\|^2 |\Delta\overline{N}_n|^2+2\langle Y_n,g(Y_n)\Delta W_n\rangle+2\langle Y_n, h(Y_n)\Delta\overline{N}_n\rangle\nonumber\\
 &+&2\Delta t\langle u_{\lambda}(Y_n), g(Y_n)\Delta W_n\rangle+2\Delta t\langle u_{\lambda}(Y_n), h(Y_n)\Delta\overline{N}_n\rangle\nonumber\\
 &+&2\Delta t\left\langle \dfrac{v(Y_n)}{1+\Delta t \|v(Y_n)\|}, g(Y_n)\Delta W_n\right\rangle+2\Delta t\left\langle\dfrac{v(Y_n)}{1+\Delta t\|v(Y_n)\|}, h(Y_n)\Delta\overline{N}_n\right\rangle\nonumber\\
 &+&2\langle g(Y_n)\Delta W_n, h(Y_n)\Delta\overline{N}_n\rangle.
 \label{ch5meansemi5b}
 \end{eqnarray}
  Finally, from the discussion above on $\Omega_n$ and $\Omega_n^c$, it follows that on $\Omega$,  if $\Delta t \leq \dfrac{2\beta}{[2(K+\lambda C)+\overline{\beta}]\overline{\beta}}\wedge\dfrac{2\beta-\overline{\beta}}{2(K+\lambda C)\overline{\beta}}$ then we have
 \begin{eqnarray}
 \|Y_{n+1}\|^2&\leq &\|Y_n\|^2+\Delta t^2 \|u_{\lambda}(Y_n)\|^2+2\Delta t\langle Y_n, u_{\lambda}(Y_n)\rangle+\|g(Y_n)\|^2 \|\Delta W_n\|^2\nonumber\\
 &+&\|h(Y_n)\|^2||\Delta\overline{N}_n\|^2+2\langle Y_n,g(Y_n)\Delta W_n\rangle+2\langle Y_n, h(Y_n)\Delta\overline{N}_n\rangle\nonumber\\
 &+&2\Delta t\langle u_{\lambda}(Y_n), g(Y_n)\Delta W_n\rangle+2\Delta t\langle u_{\lambda}(Y_n), h(Y_n)\Delta\overline{N}_n\rangle\nonumber\\
 &+&2\Delta t\left\langle \dfrac{v(Y_n)}{1+\Delta t\|v(Y_n)\|}, g(Y_n)\Delta W_n\right\rangle+2\Delta t\left\langle\dfrac{v(Y_n)}{1+\Delta t \|v(Y_n)\|}, h(Y_n)\Delta\overline{N}_n\right\rangle\nonumber\\
 &+&2\langle g(Y_n)\Delta W_n, h(Y_n)\Delta\overline{N}_n\rangle.
 \label{ch5meansemi6}
 \end{eqnarray}
 Taking the expectation in both sides of \eqref{ch5meansemi6} and using the martingale properties of $\Delta W_n$ and $\Delta\overline{N}_n$ leads to 
 \begin{eqnarray}
 \mathbb{E}\|Y_{n+1}\|^2&\leq&\mathbb{E}\|Y_n\|^2+\Delta t^2\mathbb{E}\|u_{\lambda}(Y_n)\|^2+2\Delta t\mathbb{E}\langle Y_n, u_{\lambda}(Y_n)\rangle+\Delta t\mathbb{E}\|g(Y_n)\|^2\nonumber\\
 &+&\lambda\Delta t\mathbb{E}\|h(Y_n)\|^2.
 \label{ch5meansemi7}
 \end{eqnarray}
 From Assumptions \ref{ch5assumption2}, we have 
 \begin{eqnarray*}
 \|u_{\lambda}(Y_n)\|^2\leq (K+\lambda C)^2\|Y_n\|^2 \hspace{0.5cm} \text{and} \hspace{0.5cm}\langle Y_n, u_{\lambda}(Y_n)\rangle\leq (-\rho+\lambda C)\|Y_n\|^2.
 \end{eqnarray*}
 So inequality \eqref{ch5meansemi7} gives
 \begin{eqnarray*}
 \mathbb{E}\|Y_{n+1}\|^2&\leq&\mathbb{E}\|Y_n\|^2+(K+\lambda C)^2\Delta t^2\mathbb{E}\|Y_n\|^2+2(-\rho+\lambda C)\Delta t\mathbb{E}\|Y_n\|^2+\theta^2\Delta t\mathbb{E}\|Y_n\|^2\\
 &+&\lambda C^2\Delta t\mathbb{E}\|Y_n\|^2\\
 &=&\left[1-2\rho\Delta t+(K+\lambda C)^2\Delta t^2+2\lambda C\Delta t+\theta^2\Delta t+\lambda C^2\Delta t\right]\mathbb{E}\|Y_n\|^2.
 \end{eqnarray*}
 Iterating the previous inequality leads to  
 \begin{eqnarray*}
 \mathbb{E}\|Y_n\|^2\leq\left[1-2\rho\Delta t+(K+\lambda C)^2\Delta t^2+2\lambda C\Delta t+\theta^2\Delta t+\lambda C^2\Delta t\right]^n\mathbb{E}\| X_0\|^2
 \end{eqnarray*}
 for $\Delta t \leq \dfrac{2\beta}{[2(K+\lambda C)+\overline{\beta}]\overline{\beta}}\wedge\dfrac{2\beta-\overline{\beta}}{2(K+\lambda c)\overline{\beta}}$. 
 
The stability occurs  if and only if  $\underset{n \rightarrow \infty}{\lim} \mathbb{E}\Vert Y_n \Vert^2= 0$, so we should  also have 
 \begin{eqnarray*}
 1-2\rho\Delta t+(K+\lambda C)^2\Delta t^2+2\lambda C\Delta t+\theta^2\Delta t+\lambda C^2\Delta t<1.
 \end{eqnarray*}
 That is 
 \begin{eqnarray}
 \Delta t<\dfrac{-[-2\rho+\theta^2+\lambda C(2+C)]}{(K+\lambda C)^2} =\dfrac{-\alpha_1}{(K+\lambda C)^2},
 \end{eqnarray}
 and there exists  a constant $\gamma=\gamma (\Delta t) >0$ such that
 \begin{eqnarray*}
 \mathbb{E}\|X_n\|^2\leq\left[1-2\rho\Delta t+(K+\lambda C)^2\Delta t^2+2\lambda C\Delta t+\theta^2\Delta t+\lambda C^2\Delta t\right]^n\mathbb{E}\| X_0\|^2\leq \mathbb{E}\|X_0\|^2 e^{-\gamma t_n}, 
 \end{eqnarray*}
 By the Taylor expansion, as 
 \begin{eqnarray*}
 \lefteqn {\ln(1-2\rho \Delta t+(K+\lambda C)^2\Delta t^2+2\lambda C\Delta t+\theta^2\Delta t+\lambda C^2\Delta t)}&& \\
 &=& -2\rho\Delta t+(K+\lambda C)^2\Delta t^2+2\lambda C\Delta t+\theta^2\Delta t+\lambda C^2\Delta t\ +...... ,
 \end{eqnarray*}
 we obviously have  $\underset{ \Delta t \rightarrow 0}{\lim}\gamma(\Delta t)=- (-2 \rho +\theta^2+\lambda C(2+C))=-\alpha_1$.
 \end{proof}
 
  In order to  analyse the nonlinear mean-square stability of the tamed Euler scheme (NCTS),  we use the following assumption. 
 
 \begin{Assumption}
 \label{ch5assumption3}
 There exist some  positive constant $\beta$, $\overline{\beta}$,  $\theta$, $\mu$, $K$,  $\rho$, $C$  and $a>1$ such that :
 \begin{align}
 \langle x-y, f(x)-f(y)\rangle\leq &-\rho \|x-y\|^2-\beta \|x-y\|^{a+1},\nonumber\\
 \|f(x)\| \leq \overline{\beta}\| x \|^a+K\|x\|,\nonumber\\
 \|g(x)-g(y)\| \leq &\theta\|x-y\|,\qquad \,\,\|h(x)-h(y)\| \leq C\|x-y\|,\nonumber\\
  \langle x-y, h(x)-h(y)\rangle \leq &-\mu \|x-y\|^2.
  \label{assumparticular}
 \end{align}
 \end{Assumption}
 \begin{remark}
 Apart from \eqref{assumparticular}, \assref{ch5assumption3} is a consequence of \assref{ch5assumption2}.
 \end{remark}
 
  Using \assref{ch5assumption3},  we can easily check  that the exact solution of \eqref{model} is  exponentiallly mean-square stable if
  $ \alpha_{2}:=  -2\rho +\theta^2+\lambda C(2 +C)<0$.
 \begin{theorem}
  Under \assref{ch5assumption3}, if  
 $\beta-C\overline{\beta}>0$, $\overline{\beta}(1+2C)-2\beta<0$, and $\alpha_3 :=K+\theta^2+\lambda C(2K+C)-2\lambda\mu C<0$, for any stepsize
 \begin{eqnarray*}
\Delta t<\dfrac{-\alpha_3}{2K^2+\lambda^2C^2}\wedge\dfrac{\beta-C\overline{\beta}}{\overline{\beta}^2},
 \end{eqnarray*}
 there exists  a constant $\gamma=\gamma (\Delta t) >0$ such that
 \begin{eqnarray*}
 \mathbb{E}\|X_n\|^2\leq \mathbb{E}\|X_0\|^2 e^{-\gamma  t_n}, \,\, t_n=n\, \Delta t,  \underset{ \Delta t \rightarrow 0}{\lim}\gamma(\Delta t)=-\alpha_3.
 \end{eqnarray*}  
 and  the  numerical solution \eqref{ncts} is exponentiallly mean-square stable.
 \end{theorem}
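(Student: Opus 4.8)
The plan is to mimic the structure of the proof of \thmref{nt1} for the non compensated tamed scheme \eqref{ncts}. First I would write the scheme applied to \eqref{model} as
\begin{eqnarray*}
X_{n+1}=X_n+\dfrac{\Delta t f(X_n)}{1+\Delta t\|f(X_n)\|}+g(X_n)\Delta W_n+h(X_n)\Delta N_n,
\end{eqnarray*}
take the squared norm of both sides, and expand into the twelve inner-product terms as in \eqref{ch5meansemi1}. The three ``diagonal'' terms that need care are $\dfrac{\Delta t^2\|f(X_n)\|^2}{(1+\Delta t\|f(X_n)\|)^2}$, the drift term $\dfrac{2\Delta t\langle X_n,f(X_n)\rangle}{1+\Delta t\|f(X_n)\|}$, and (because $\Delta N_n$ is \emph{not} compensated here) the jump term $\|h(X_n)\|^2\Delta N_n^2$ together with the cross term $\dfrac{2\Delta t\langle X_n,f(X_n)\rangle}{1+\Delta t\|f(X_n)\|}$ multiplied by contributions from $\EE\Delta N_n=\lambda\Delta t$. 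Using $\langle x,f(x)\rangle\le-\rho\|x\|^2-\beta\|x\|^{a+1}$ and $\|f(x)\|\le\overline\beta\|x\|^a+K\|x\|$ from \assref{ch5assumption3}, the drift term is bounded above (keeping the denominator) by $\dfrac{2\Delta t(-\rho\|X_n\|^2-\beta\|X_n\|^{a+1})}{1+\Delta t\|f(X_n)\|}$, and after multiplying by the $(1+\lambda c\Delta t)$-type factor coming from $\EE\Delta N_n$ one produces a term proportional to $\|X_n\|^{a+1}$ with coefficient $-2\beta\Delta t+(\text{stuff})\Delta t^2$; the bound $\dfrac{\Delta t^2\|f(X_n)\|^2}{(1+\Delta t\|f(X_n)\|)^2}\le\Delta t^2\overline\beta^2\|X_n\|^{2a}+(\text{lower order})$ contributes $\overline\beta^2\Delta t^2\|X_n\|^{2a}$, which on $\|X_n\|>1$ dominates $\|X_n\|^{a+1}$.

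Next I would split $\Omega$ into $\Omega_n=\{\|X_n\|>1\}$ and $\Omega_n^c$ exactly as in the proof of \thmref{nt1}. On each piece I collect the coefficient of the superlinear term $\|X_n\|^{a+1}$ (or $\|X_n\|^{2a}$); the hypotheses $\beta-C\overline\beta>0$ and $\overline\beta(1+2C)-2\beta<0$ are precisely what is needed to make this coefficient non-positive once $\Delta t<\dfrac{\beta-C\overline\beta}{\overline\beta^2}$, so that on all of $\Omega$ the superlinear contribution can be discarded. What remains after taking expectations, using the martingale property $\EE\Delta W_n=0$, $\EE(\Delta W_n\Delta N_n)=0$ and the moments $\EE\Delta N_n=\lambda\Delta t$, $\EE\Delta N_n^2=\lambda\Delta t+\lambda^2\Delta t^2$, is a recursion of the form
\begin{eqnarray*}
\EE\|X_{n+1}\|^2\le\bigl[1+(2K^2+\lambda^2C^2)\Delta t^2+\alpha_3\Delta t\bigr]\EE\|X_n\|^2,
\end{eqnarray*}
where the linear coefficient $\alpha_3=K+\theta^2+\lambda C(2K+C)-2\lambda\mu C$ is assembled from $\|f(x)\|^2\le 2K^2\|x\|^2+\dots$ (on the linear part), $\|g(x)\|^2\le\theta^2\|x\|^2$, $\lambda\|h(x)\|^2\le\lambda C^2\|x\|^2$, $\langle x,f(x)\rangle\le-\rho\|x\|^2$ feeding $2\lambda C\langle x,f(x)\rangle\Delta t^2$-type cross terms, and crucially $\langle x,h(x)\rangle\le-\mu\|x\|^2$ producing the $-2\lambda\mu C$ contribution from the uncompensated cross term $2\lambda\Delta t\langle X_n,h(X_n)\rangle$ implicit in the $\Delta N_n$ expansion.

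Finally, iterating gives $\EE\|X_n\|^2\le\bigl[1+(2K^2+\lambda^2C^2)\Delta t^2+\alpha_3\Delta t\bigr]^n\EE\|X_0\|^2$; requiring the bracket to be $<1$ (together with $\alpha_3<0$) yields exactly $\Delta t<\dfrac{-\alpha_3}{2K^2+\lambda^2C^2}$, and setting $\gamma(\Delta t)=-\dfrac{1}{\Delta t}\ln\bigl[1+(2K^2+\lambda^2C^2)\Delta t^2+\alpha_3\Delta t\bigr]>0$ gives $\EE\|X_n\|^2\le\EE\|X_0\|^2e^{-\gamma t_n}$, with $\lim_{\Delta t\to0}\gamma(\Delta t)=-\alpha_3$ by Taylor expansion of the logarithm. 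The intersection of the two step-size constraints is the claimed bound. The main obstacle I expect is \emph{bookkeeping}: unlike the semi-tamed case where only the genuinely superlinear part $v$ sits inside the taming fraction, here the entire $f$ (linear part included) is tamed, so the drift fraction $\dfrac{2\Delta t\langle X_n,f(X_n)\rangle}{1+\Delta t\|f(X_n)\|}$ must be handled carefully --- one cannot simply drop the denominator when the numerator is negative and one wants to keep the $-2\rho\|X_n\|^2$ contribution, so the argument must separately track the linear feedback ($-2\rho$, giving $\alpha_3$ via the $h$-cross term) and the superlinear feedback ($-2\beta\|X_n\|^{a+1}$, absorbed on $\Omega_n\cup\Omega_n^c$), making sure the $(1+\lambda C\Delta t)$ factors from the uncompensated jump are distributed to the right places. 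Verifying that $\overline{\beta}(1+2C)-2\beta<0$ and $\Delta t<(\beta-C\overline\beta)/\overline\beta^2$ indeed kill the superlinear coefficient on \emph{both} subsets simultaneously is the one spot where the hypotheses must be used in a slightly non-obvious combination.
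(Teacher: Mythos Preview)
Your proposal follows essentially the same route as the paper: square the scheme, bound the individual terms via \assref{ch5assumption3}, split on $\Omega_n=\{\|X_n\|>1\}$ and its complement, use the hypotheses $\overline\beta(1+2C)-2\beta<0$ and $\Delta t<(\beta-C\overline\beta)/\overline\beta^2$ to force the coefficient of the superlinear term $\|X_n\|^{a+1}$ to be nonpositive on each piece, take expectations (using independence of $X_n$ from $\Delta W_n,\Delta N_n$ and the moments $\EE\Delta N_n=\lambda\Delta t$, $\EE\Delta N_n^2=\lambda\Delta t+\lambda^2\Delta t^2$) to reach the recursion $\EE\|X_{n+1}\|^2\le[1+(2K^2+\lambda^2C^2)\Delta t^2+\alpha_3\Delta t]\EE\|X_n\|^2$, and iterate. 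One clarification that resolves your flagged ``obstacle'': the paper simply \emph{discards} the term $-2\rho\Delta t\|X_n\|^2/(1+\Delta t\|f(X_n)\|)$ (it is nonpositive, so dropping it is an upper bound), which is why $\rho$ never appears in $\alpha_3$; the negative contribution to $\alpha_3$ comes solely from $2\langle X_n,h(X_n)\rangle\Delta N_n\le-2\mu\|X_n\|^2\Delta N_n$, and the $2\lambda CK$ and $2C\overline\beta$ pieces come from bounding the cross term $2\langle \Delta t f(X_n)/(1+\Delta t\|f(X_n)\|),h(X_n)\Delta N_n\rangle$ by Cauchy--Schwarz together with the growth bound on $f$, not from any $(1+\lambda c\Delta t)$-type multiplication of the drift.
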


\begin{proof}
 From equation \eqref{ncts}, we have 
 \begin{eqnarray}
 \|X_{n+1}\|^2&=&\|X_n\|^2+\dfrac{\Delta t^2 \|f(X_n)\|^2}{\left(1+\Delta t\|f(X_n)\|\right)^2}+\|g(X_n)\Delta W_n\|^2+\|h(X_n)\Delta N_n\|^2\nonumber\\
 &+&2\left\langle X_n,\dfrac{\Delta tf(X_n)}{1+\Delta t \|f(X_n)\|}\right\rangle+2\left\langle X_n+\dfrac{\Delta tf(X_n)}{1+\Delta t\|f(X_n)\|}, g(X_n)\Delta W_n\right\rangle\nonumber\\
 &+&2\left\langle X_n+\dfrac{\Delta tf(X_n)}{1+\Delta t\|f(X_n)\|}, h(X_n)\Delta N_n\right\rangle+2\langle g(X_n)\Delta W_n, h(X_n)\Delta N_n\rangle.
 \label{ch5meantamed1}
 \end{eqnarray}
 Using  \assref{ch5assumption3}, it follows that 
 \begin{eqnarray*}
 2\left\langle X_n, \dfrac{\Delta t f(X_n)}{1+\Delta t \|f(X_n)\|}\right\rangle &\leq &\dfrac{-2\Delta t\rho\|X_n\|^2}{1+\Delta t\|f(X_n)\|}-\dfrac{2\beta \Delta t \|X_n\|^{a+1}}{1+\Delta t \|f(X_n)\|}\\
 &\leq& -\dfrac{2\beta \Delta t\|X_n\|^{a+1}}{1+\Delta t\|f(X_n)\|}.
 \label{ch5meantamed2} \\
 \|g(X_n)\Delta W_n\|^2 &\leq& \theta^2 \|X_n\|^2\|\Delta W_n\|^2\\
 \|h(X_n)\Delta N_n \|^2 &\leq& C^2\|X_n\|^2|\Delta N_n|^2.\label{ch5meantamed2a}\\
 2\langle X_n, h(X_n)\Delta N_n\rangle &=& 2\langle X_n, h(X_n)\rangle\Delta N_n \leq -2\mu\|X_n\|^2|\Delta N_n|,\\
 \label{ch5meantamed3}\\
 2\left\langle\dfrac{\Delta tf(X_n)}{1+\Delta t \|f(X_n)\|}, h(X_n)\Delta N_n\right\rangle &\leq &\dfrac{2\Delta t\|f(X_n)\|\|h(X_n)\||\Delta N_n|}{1+\Delta t\|f(X_n)\|}\nonumber\\
 &\leq &\dfrac{2\Delta tC\overline{\beta}\|X_n\|^{a+1}}{1+\Delta t\|f(X_n)\|}|\Delta N_n|+2CK\|X_n\|^2|\Delta N_n|
 \end{eqnarray*}

 So from \assref{ch5assumption3}, we have
 \begin{eqnarray}
\label{ch5assmeantamed} \left\{
 \begin{array}{lllll}
 \left\langle X_n, \dfrac{\Delta tf(X_n)}{1+\Delta t \|f(X_n)\|}\right\rangle\leq -\dfrac{2\beta \Delta t\|X_n\|^{a+1}}{1+\Delta t\|f(X_n)\|}\\
 
 \|g(X_n)\Delta W_n\|^2 \leq \theta^2 \|X_n\|^2\|\Delta W_n\|^2\\
 
 \|h(X_n)\Delta N_n\|^2 \leq C^2\|X_n\|^2|\Delta N_n|^2\\
 
  2\langle X_n, h(X_n)\Delta N_n\rangle \leq -2\mu\|X_n\|^2|\Delta N_n|\\
  
   2\left\langle\dfrac{\Delta tf(X_n)}{1+\Delta t \|f(X_n)\|}, h(X_n)\Delta N_n\right\rangle \leq\dfrac{2\Delta tC\overline{\beta}\|X_n\|^{a+1}}{1+\Delta t\|f(X_n)\|}|\Delta N_n|+2CK \|X_n\|^2|\Delta N_n|

 \end{array}
 \right.
 \end{eqnarray}
 
 Let us define $\Omega_n :=\{w\in\Omega : \|X_n(\omega)\|>1\}$.
 
 $\bullet$  On $\Omega_n$,  using Assumption \ref{ch5assumption3} we have : 
 \begin{eqnarray}
 \dfrac{\Delta t^2\|f(X_n)\|^2}{\left(1+\Delta t \|f(X_n)\|\right)^2}&\leq& \dfrac{\Delta t\|f(X_n)\|}{1+\Delta t \|f(X_n)\|}
 \leq \dfrac{\Delta t\overline{\beta}\|X_n\|^a}{1+\Delta t\|f(X_n)\|}+K\Delta t\|X_n\|\nonumber\\
 &\leq& \dfrac{\Delta t\overline{\beta}\|X_n\|^{a+1}}{1+\Delta t\|f(X_n)\|}+K\Delta t \|X_n\|^2.
 \label{ch5meantamed5}
 \end{eqnarray} 
 Therefore using \eqref{ch5assmeantamed} and \eqref{ch5meantamed5} in \eqref{ch5meantamed1} yields 
 \begin{eqnarray}
 \|X_{n+1}\|^2&\leq&\|X_n\|^2+K\Delta t\|X_n\|^2+\theta^2\|X_n\|^2\|\Delta W_n\|^2+C^2 \|X_n\|^2|\Delta N_n|^2\nonumber\\
 &+&2\left\langle X_n+\dfrac{\Delta t f(X_n)}{1+\Delta t\|f(X_n)\|}, g(X_n)\Delta W_n\right\rangle-2\mu\|X_n\|^2|\Delta N_n|+2CK|\Delta N_n|\nonumber\\
 &+&2\langle g(X_n)\Delta W_n, h(X_n)\Delta N_n\rangle+\dfrac{\left[-2\beta\Delta t+\overline{\beta}\Delta t+2\overline{\beta}C\Delta t\right]\|X_n\|^{a+1}}{1+\Delta t\|f(X_n)\|}.
 \label{ch5meantamed6}
 \end{eqnarray}
 Since  $\overline{\beta}(1+2C)-2\beta<0$, \eqref{ch5meantamed6} becomes 
 \begin{eqnarray}
 \|X_{n+1}\|^2&\leq&\|X_n\|^2+K\Delta t\|X_n\|^2+\theta^2\|X_n\|^2 \|\Delta W_n\|^2+C^2\|X_n\|^2|\Delta N_n|^2\nonumber\\
 &+&2\left\langle X_n+\dfrac{\Delta tf(X_n)}{1+\Delta t\|f(X_n)\|}, g(X_n)\Delta W_n\right\rangle-2\mu\|X_n\|^2|\Delta N_n|+2CK|\Delta N_n|\nonumber\\
 &+&2\langle g(X_n)\Delta W_n, h(X_n)\Delta N_n\rangle.
 \label{ch5meantamed7}
 \end{eqnarray}
 $\bullet$ On $\Omega_n^c$, using Assumption \ref{ch5assumption3} and the inequality $(a+b)^2\leq 2a^2+2b^2$ we have 
 \begin{eqnarray}
 \dfrac{\Delta t^2 \|f(X_n)\|^2}{\left(1+\Delta t\|f(X_n)\|\right)^2}&\leq& \dfrac{\Delta t^2\|f(X_n)\|^2}{1+\Delta t\|f(X_n)\|}
 \leq \dfrac{2\Delta t^2\overline{\beta}^2 \|X_n\|^{2a}}{1+\Delta t\|f(X_n)\|}+2K^2\Delta t^2\|X_n\|^2\nonumber\\
 &\leq& \dfrac{2\Delta t^2\overline{\beta}^2\|X_n\|^{a+1}}{1+\Delta t\|f(X_n)\|}+2K^2\Delta t^2\|X_n\|^2.
 \label{ch5meantamed8}
 \end{eqnarray}
 Therefore, using \eqref{ch5assmeantamed} and \eqref{ch5meantamed8}, \eqref{ch5meantamed1} becomes
 \begin{eqnarray}
 \|X_{n+1}\|^2&\leq&\|X_n \|^2+2K^2\Delta t^2 \| X_n\|^2+\theta^2 \|X_n\|^2\|\Delta W_n\|^2+C^2\|X_n\|^2|\Delta N_n|^2\nonumber\\
 &+&2\left\langle X_n+\dfrac{\Delta tf(X_n)}{1+\Delta t\|f(X_n)\|}, g(X_n)\Delta W_n\right\rangle-2\mu\|X_n\|^2|\Delta N_n|+2CK|\Delta N_n|\nonumber\\
 &+&2\langle g(X_n)\Delta W_n, h(X_n)\Delta N_n\rangle+\dfrac{\left[2C\overline{\beta}\Delta t-2\beta\Delta t+2\overline{\beta}^2\Delta t^2\right] \|X_n\|^{a+1}}{1+\Delta t \|f(X_n)\|}.
 \label{ch5meantamed9}
 \end{eqnarray}
 For $\Delta t<\dfrac{\beta-C\overline{\beta}}{\overline{\beta}^2}$, which is equivalent to $2C\overline{\beta}\Delta t-2\beta\Delta t+2\overline{\beta}^2\Delta t^2<0$, 
 \eqref{ch5meantamed9} becomes 
 \begin{eqnarray}
 \|X_{n+1}\|^2&\leq&\|X_n\|^2+2K^2\Delta t^2\|X_n\|^2+\theta^2\|X_n\|^2\|\Delta W_n\|^2+C^2\|X_n\|^2|\Delta N_n|^2\nonumber\\
 &+&2\left\langle X_n+\dfrac{\Delta tf(X_n)}{1+\Delta t\|f(X_n)\|}, g(X_n)\Delta W_n\right\rangle-2\mu \|X_n\|^2|\Delta N_n|+2CK|\Delta N_n|\nonumber\\
 &+&2\langle g(X_n)\Delta W_n, h(X_n)\Delta N_n\rangle.
 \label{ch5meantamed10}
 \end{eqnarray}
 
 
From the above discussion on $\Omega_n$ and $\Omega_n^c$, it follows that on $\Omega$, if  $\Delta t<\dfrac{\beta-C\overline{\beta}}{\overline{\beta}^2}$ and $\overline{\beta}(1+2C)-2\beta<0$ then we have
 \begin{eqnarray}
 \|X_{n+1}\|^2&\leq&\|X_n\|^2+K\Delta t\|X_n\|^2+2K^2\Delta t^2\|X_n\|^2+\theta^2\|X_n\|^2\|\Delta W_n\|^2+C^2\|Y_n\|^2|\Delta N_n|^2\nonumber\\
 &+&2\left\langle X_n+\dfrac{\Delta tf(X_n)}{1+\Delta t\|f(X_n)\|}, g(X_n)\Delta W_n\right\rangle-2\mu\|X_n\|^2|\Delta N_n|+2CK|\Delta N_n|\nonumber\\
 &+&2\langle g(X_n)\Delta W_n, h(X_n)\Delta N_n\rangle.
 \label{ch5meantamed11}
 \end{eqnarray}
 
 Taking the expectation in both sides of \eqref{ch5meantamed11},  
 using the relation $\mathbb{E}\|\Delta W_n\|=0$, $\mathbb{E}\|\Delta W_n\|^2=\Delta t$, $\mathbb{E}|\Delta N_n|=\lambda\Delta t$ and $\mathbb{E}|\Delta N_n|^2=\lambda^2\Delta t^2+\lambda\Delta t$ 
 leads to 
 \begin{eqnarray*}
 \mathbb{E}\|X_{n+1}\|^2&\leq& \mathbb{E}\|X_n\|^2+K\Delta t \mathbb{E}\|X_n\|^2+2K^2\Delta t^2\mathbb{E}\|X_n\|^2+\theta^2\Delta t\mathbb{E}\|X_n\|^2+\lambda^2C^2\Delta t^2\mathbb{E}\|X_n\|^2\\
 &+&\lambda C^2\Delta t\mathbb{E}\|X_n\|^2
 -2\mu\lambda\Delta t\mathbb{E}\|X_n\|^2+2\lambda CK\Delta t\mathbb{E}\|X_n\|^2\\
 &=&\left[1+(2K^2+\lambda^2C^2)\Delta t^2+(K+\theta^2+\lambda C^2-2\mu\lambda+2\lambda CK)\Delta t\right]\mathbb{E}\|Y_n\|^2.
 \end{eqnarray*}
 Iterating the last inequality leads to 
 \begin{eqnarray*}
 \mathbb{E}\|X_n\|^2\leq \left[1+(2K^2+\lambda^2C^2)\Delta t^2+(K+\theta^2+\lambda C^2-2\mu\lambda+2\lambda CK)\Delta t\right]^n\mathbb{E}\|X_0\|^2.
 \end{eqnarray*}
 To have the stability of the NCTS scheme, we should also have
 \begin{eqnarray*}
 1+(2K^2+\lambda^2C^2)\Delta t^2+(K+\theta^2+\lambda C^2-2\mu\lambda+2\lambda CK)\Delta t<1.
 \end{eqnarray*}
 That is 
 \begin{eqnarray*}
 \Delta t<\dfrac{-[K+\theta^2+\lambda C^2-2\mu\lambda+2\lambda CK]}{2K^2+\lambda^2C^2},
 \end{eqnarray*}
 and there exists  a constant  $\gamma=\gamma (\Delta t) >0$ such that
 \begin{eqnarray*}
 \mathbb{E}\|X_n\|^2\leq \mathbb{E}\|X_0\|^2 e^{-\gamma \, t_n}, \,\, t_n= n\,\Delta t.
 \end{eqnarray*}
 As  in  the proof  of  \thmref{nt1}, we obviously have  $\underset{ \Delta t \rightarrow 0}{\lim}\gamma(\Delta t)=- (K +\theta^2+\lambda C(2 K+C)-2\mu \lambda)=-\alpha_3$.
 \end{proof}
 \begin{remark}
  Note that from the studies  above,   we can  deduce the  linear stabilities of schemes  
  \eqref{ncts} and \eqref{sts} for SDEs  without jump by setting $c=0$. 
  However, by setting  $h=0$  we  obtain the  nonlinear stability of semi-tamed scheme \eqref{sts} without jump performed in  \cite{lamp}.
  \end{remark}

 \section{Numerical simulations}
 \label{simulations}
 The goal of this section is to provide some practical examples to sustain our theoretical results in  the previous section.
 We compare  the stability behaviors of the tamed scheme and the semi-tamed scheme with  those of  numerical schemes presented in \cite{Desmond2}.
 More precisely, we test the stability of the semi-tamed scheme, tamed scheme, backward Euler and split-step backward Euler schemes with different stepsizes.
 We denote by $Y_n$ all the approximated solutions from  those schemes.
 \subsection{Linear case}
 Here we consider the following linear stochastic differential equation 
 \begin{eqnarray*}
  dX_t=-X_tdt+2X_t dW_t-0.9X_tdN_t, \hspace{0.5cm} t\in (0,1],\quad X_0=1,
 \end{eqnarray*}
 where  the intensity of the poisson process is taken  to be  $\lambda=9$. So 
 $l=2a+b^2+\lambda c(2+c)=-0.91<0$, which  ensures the  linear mean-square stable of  the exact solution.
 We can  easily check from  the  theoretical results  in  the previous section  that for $\Delta t<0.08$ the semi-tamed and the tamed Euler scheme reproduce 
 the linear  mean-square property of the exact solution. In \figref{FIG02}, we illustrate the mean-square stability of the semi-tamed scheme, 
 the tamed scheme, the Backward and the Spli-Step backward Euler scheme \cite{Desmond2} for different stepsizes. 
 We take $\Delta t=0.02, 0.01$ and $0.005$,  and generate $5\times 10^3$ paths for each numerical method. 
 We can observe from   \figref{FIG02} that the semi-tamed  scheme works well with the Backward and  Spli-Step backward Euler schemes.
 We can also observe that the semi-tamed  scheme works better than the tamed Euler scheme, and in some case overcomes the Backward and  Spli-Step backward Euler schemes.
 
\subsection{Nonlinear case}
For nonlinear stability,  we consider the following nonlinear stochastic differential equation
\begin{eqnarray*}
dX_t=\left(-\dfrac{12}{100}X_t-X_t^3 \right)dt+\dfrac{1}{10}X_tdW_t-\dfrac{1}{10}dN_t, \hspace{0.5cm} t\in(0,2],\quad \quad X_0=1.
\end{eqnarray*}
The poisson process  intensity is $\lambda=1$, $f(x)=-\dfrac{12}{100}x-x^3, \; g(x)=\dfrac{1}{10}x$ and $h(x)=-\dfrac{1}{10}x$.
We take  $u(x)=-\dfrac{12}{100}x$ and $v(x)=-x^3$. Indeed, we obviously have 
\begin{eqnarray*}
\langle x-y, f(x)-f(y)\rangle \leq -\dfrac{12}{100}(x-y)^2\\
|g(x)-g(y)|^2\leq \dfrac{1}{100}(x-y)^2, \,\;\;
|h(x)-h(y)|^2\leq \dfrac{1}{100}(x-y)^2.
\end{eqnarray*}
Then  $\mu=-\dfrac{12}{100}$, $\sigma=\gamma=\dfrac{1}{100}$ and  $\alpha=2\mu+\sigma+\lambda\sqrt{\gamma}(\sqrt{\gamma}+2)=-\dfrac{1}{50}<0$.
 It follows  that the exact solution is exponentially mean-square stable.
 One can easily check that for $\Delta t<0.41$ from  theoretical results the semi-tamed and the tamed Euler reproduce the  exponentially mean-square stability property of the exact solution. 
 \figref{FIG03} illustrates the stability of the tamed  scheme, the semi-tamed  scheme, 
 the Backward Euler and the Split-Step Backward Euler for different stepsizes. We take $\Delta t =0.04, 0.02$ and $0.01$ and  generate $3\times 10^3$ samples for each numerical method.
 We can observe that all  the schemes have the same stability behavior, although tamed and semi-tamed schemes are very efficient and  Backward Euler and the Split-Step Backward Euler
 less efficient as nonlinear equations are solved for each time step. 
 
\section*{Acknowledgements}
This project was supported by the Robert Bosch
Stiftung through the AIMS ARETE chair programme.

\begin{figure}
\begin{center}
  \subfigure[]{
\label{FIG02a}
   \includegraphics[width=0.48\textwidth]{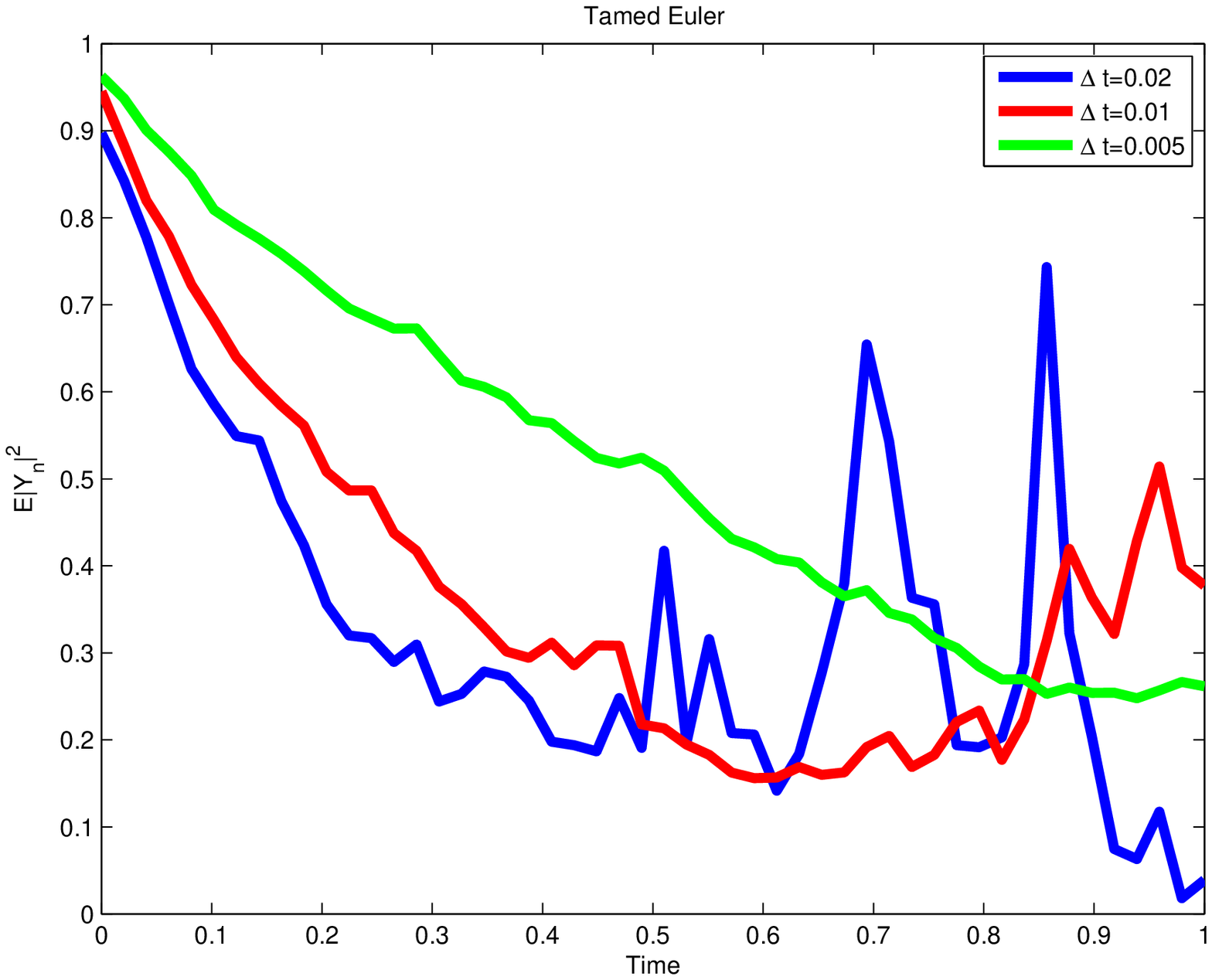}}
   \hskip 0.01\textwidth
   \subfigure[]{
   \label{FIG02b}
   \includegraphics[width=0.48\textwidth]{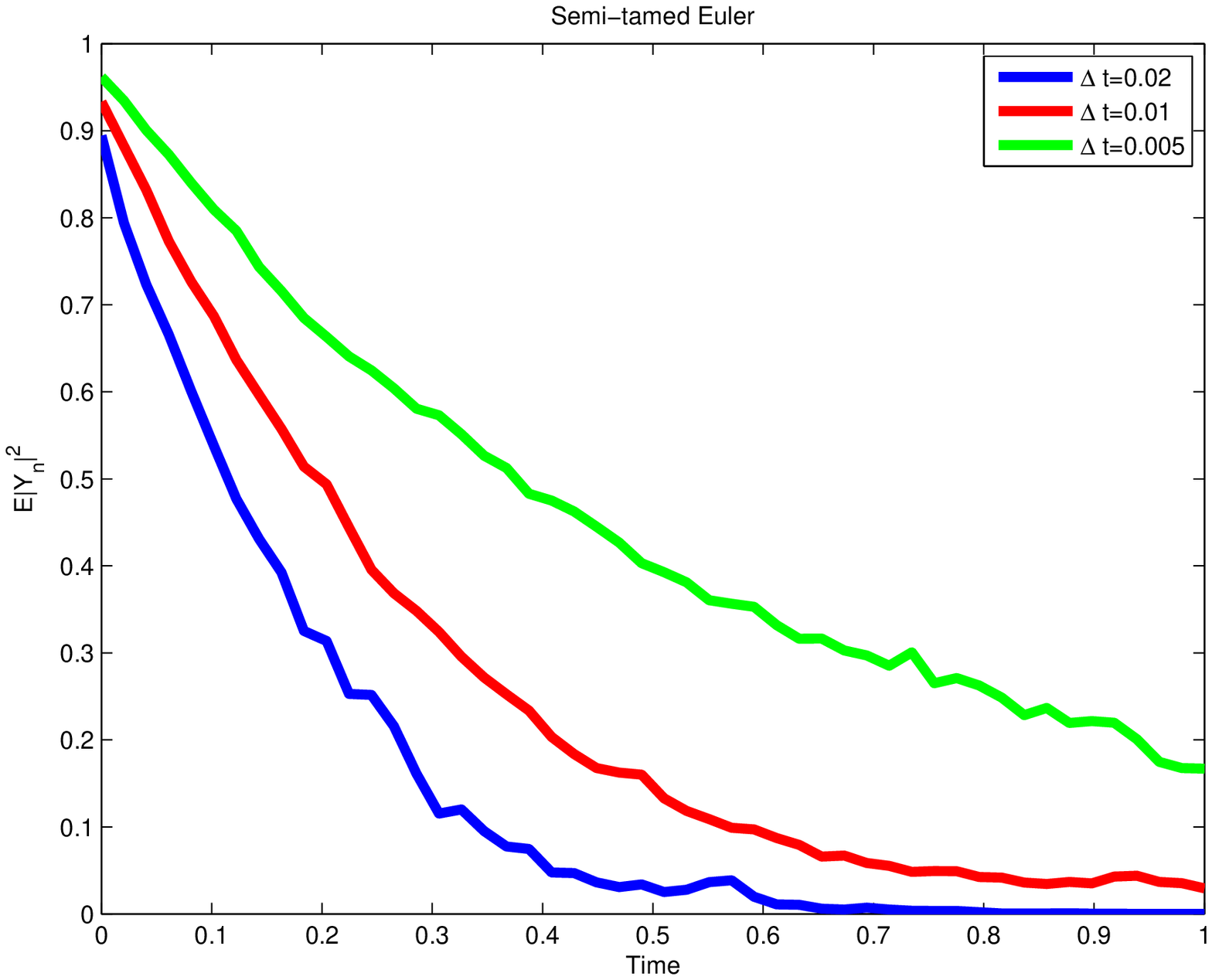}}
   \hskip 0.01\textwidth
\subfigure[]{
   \label{FIG02c}
   \includegraphics[width=0.48\textwidth]{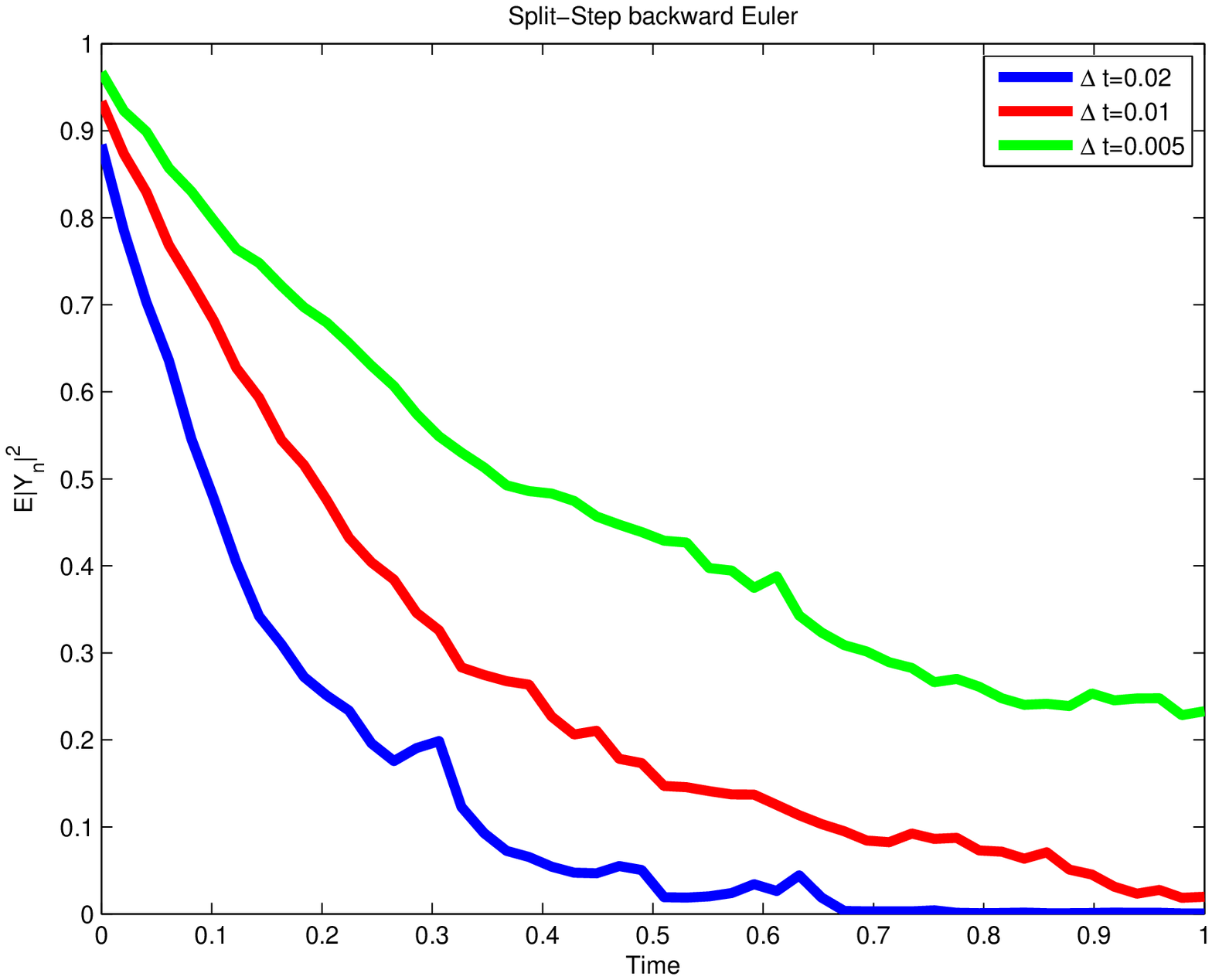}}
   \hskip 0.01\textwidth
\subfigure[]{
   \label{FIG02d}
   \includegraphics[width=0.48\textwidth]{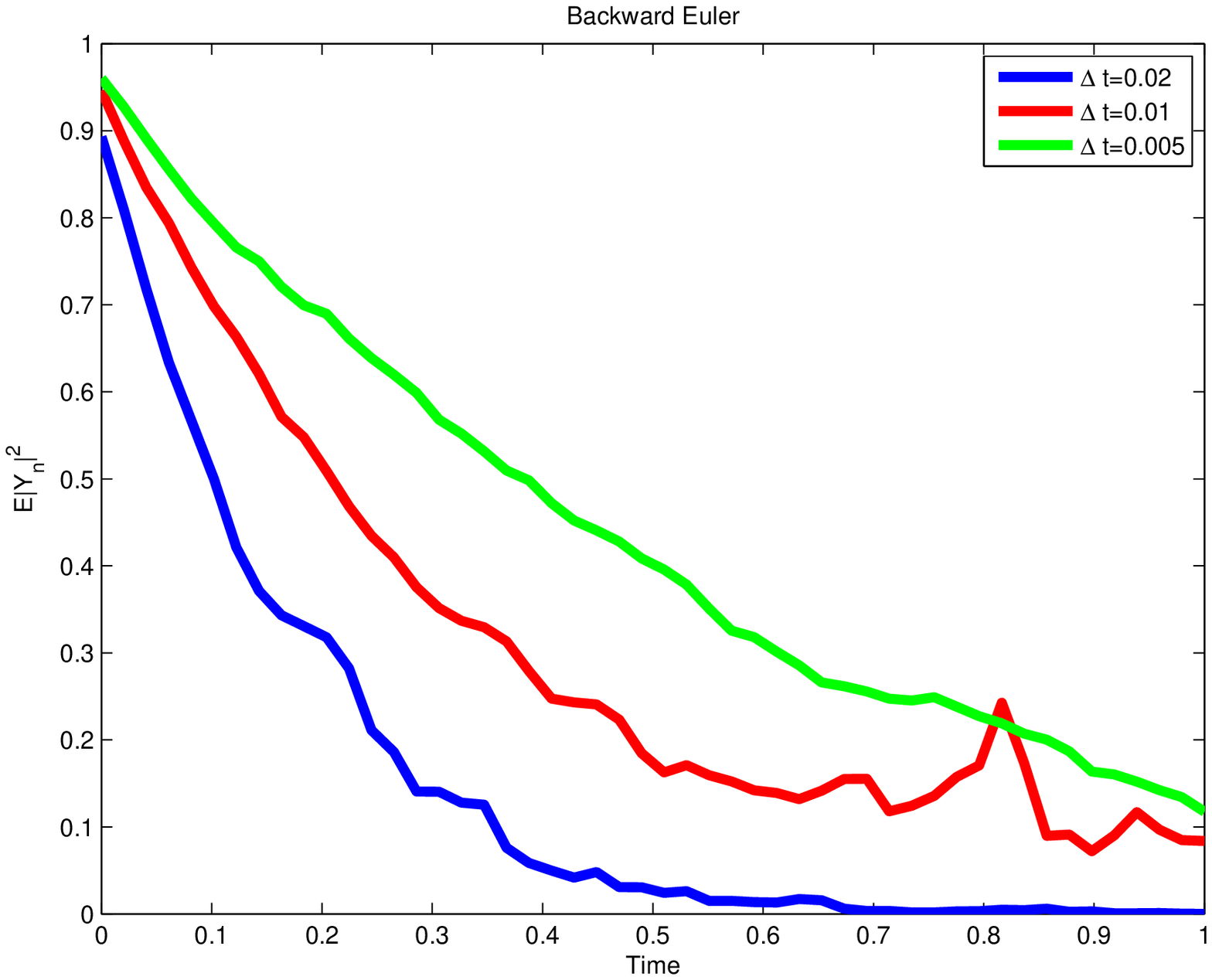}}
\caption{ Linear stability of the tamed scheme (a),  semi-tamed scheme (b),  Split-Step Backward Euler (c) and  Backward Euler (d) for
$\Delta t =0.02, 0.01, 0.005$ and  $5\times 10^3$ samples of each numerical scheme.}
 \label{FIG02}
 \end{center}
 \end{figure}
 \clearpage
\begin{figure}
 \begin{center}
  \subfigure[]{
\label{FIG03a}
   \includegraphics[width=0.48\textwidth]{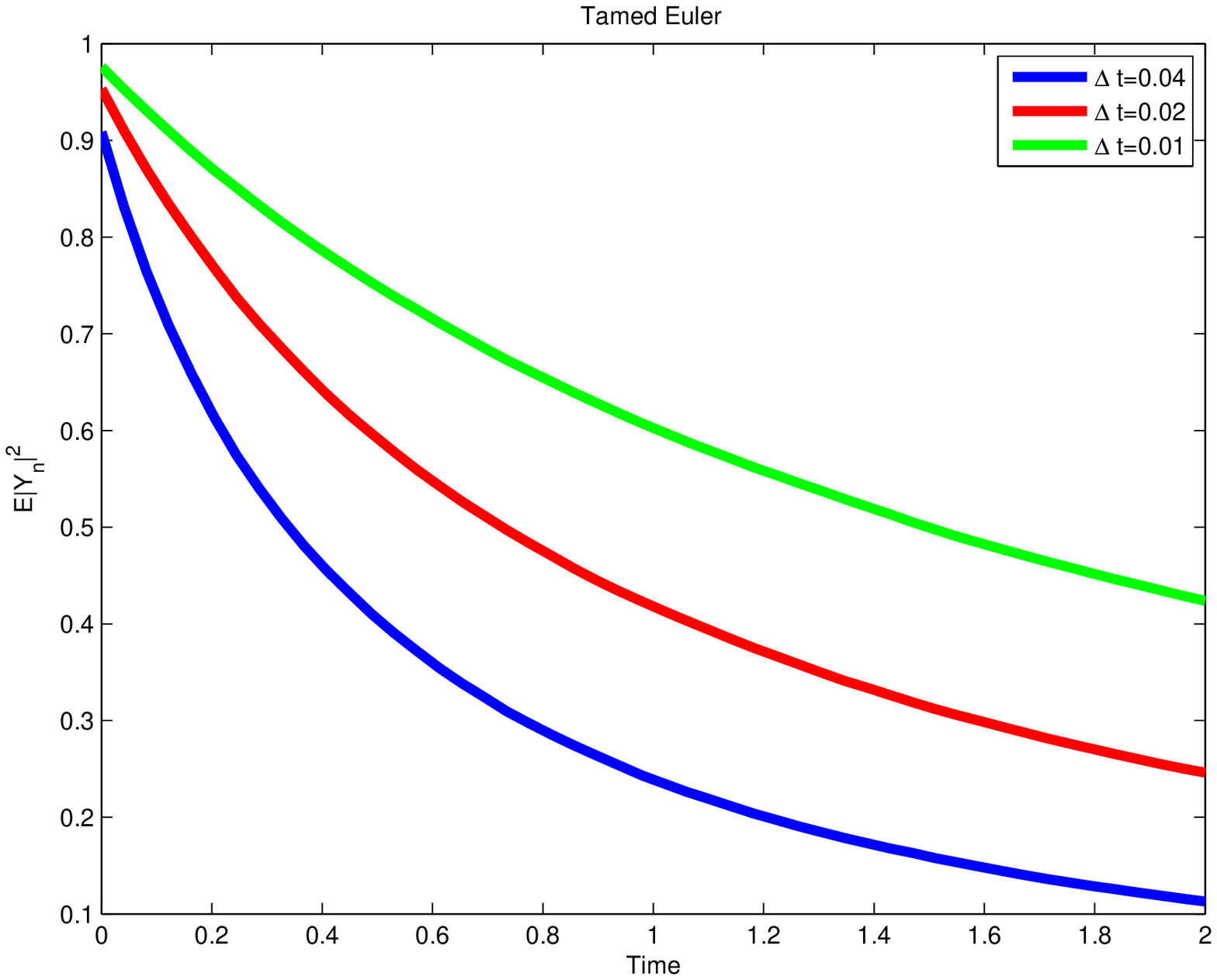}}
   \hskip 0.01\textwidth
   \subfigure[]{
   \label{FIG03b}
   \includegraphics[width=0.48\textwidth]{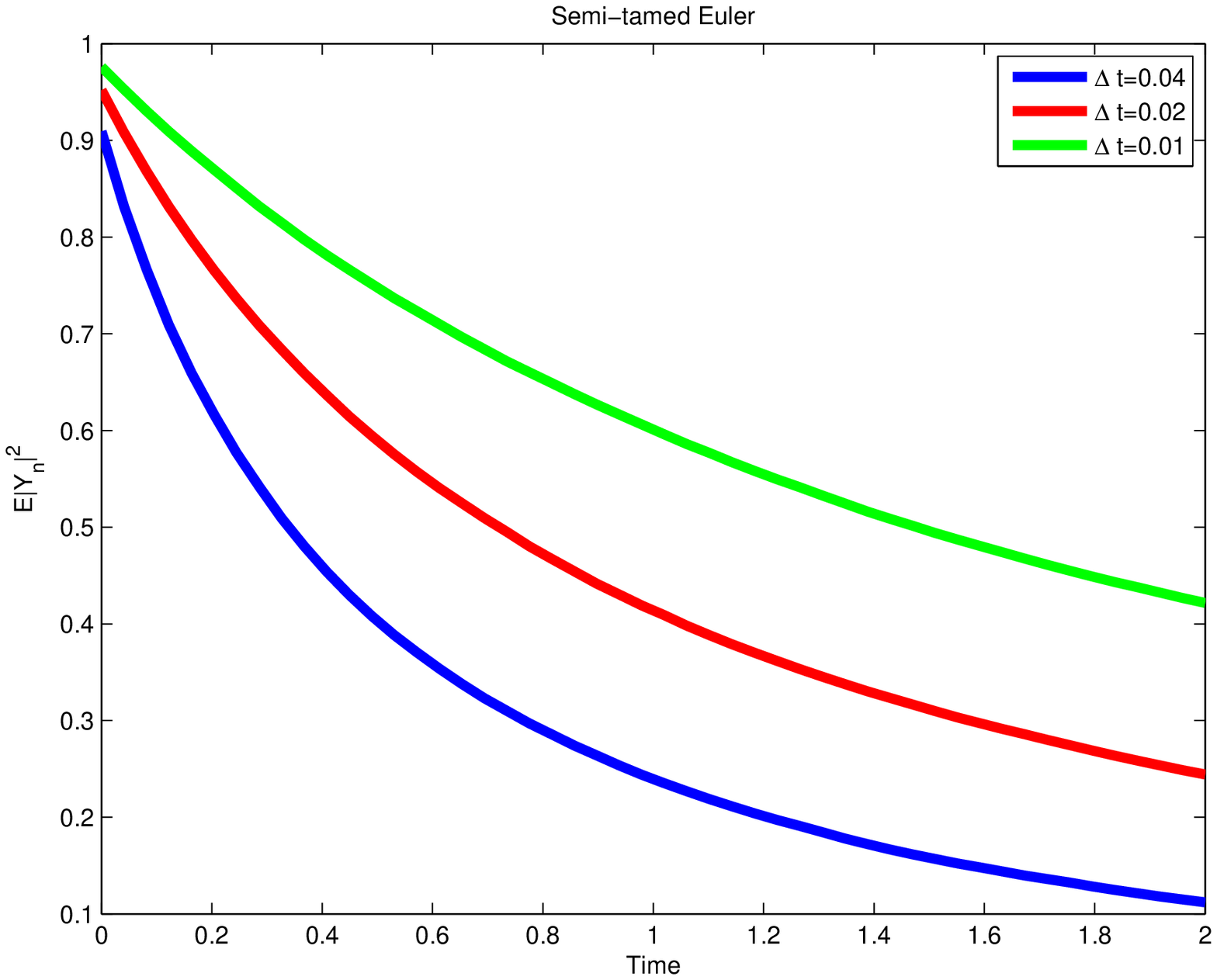}}
   \hskip 0.01\textwidth
\subfigure[]{
   \label{FIG03c}
   \includegraphics[width=0.48\textwidth]{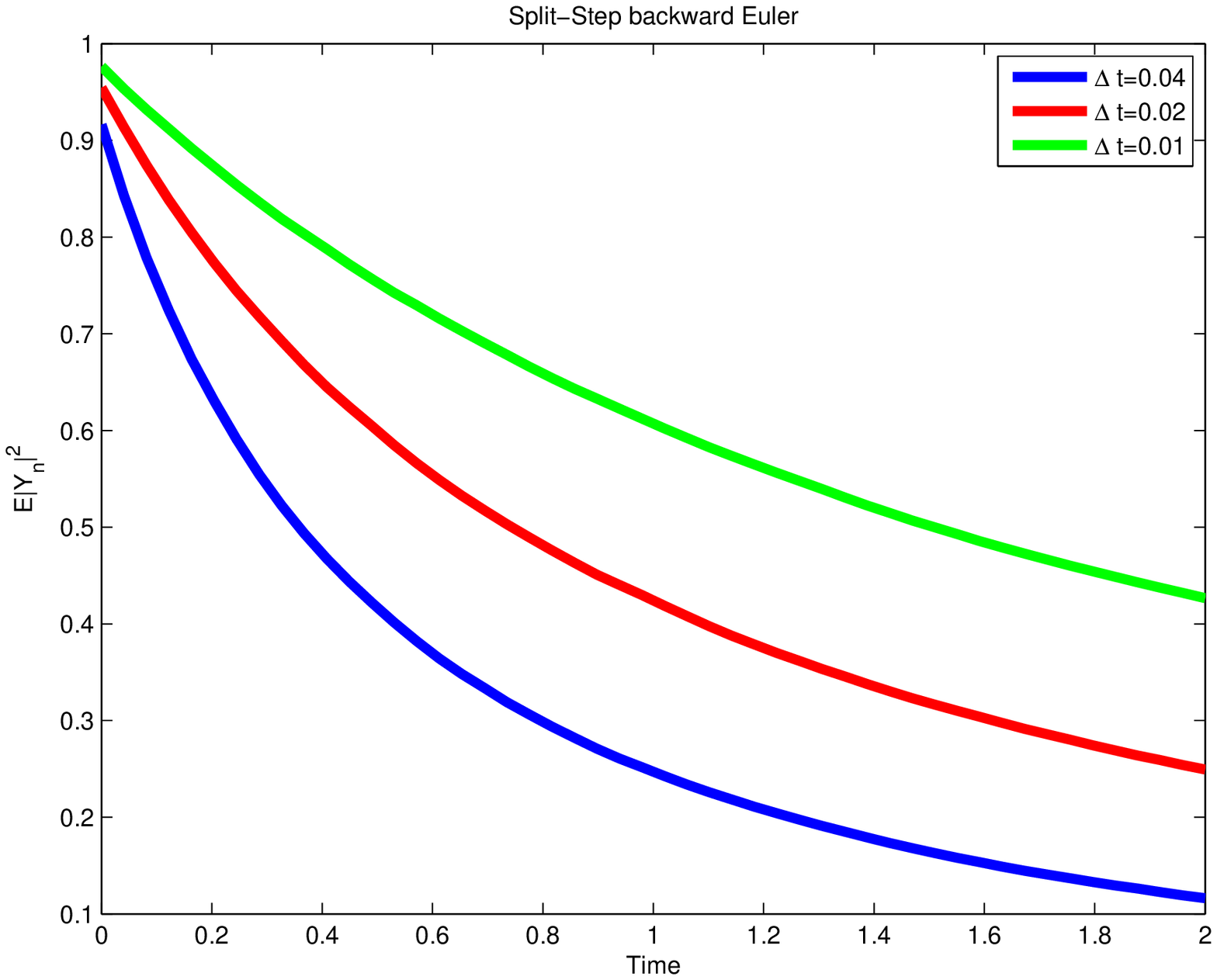}}
   \hskip 0.01\textwidth
\subfigure[]{
   \label{FIG03d}
   \includegraphics[width=0.48\textwidth]{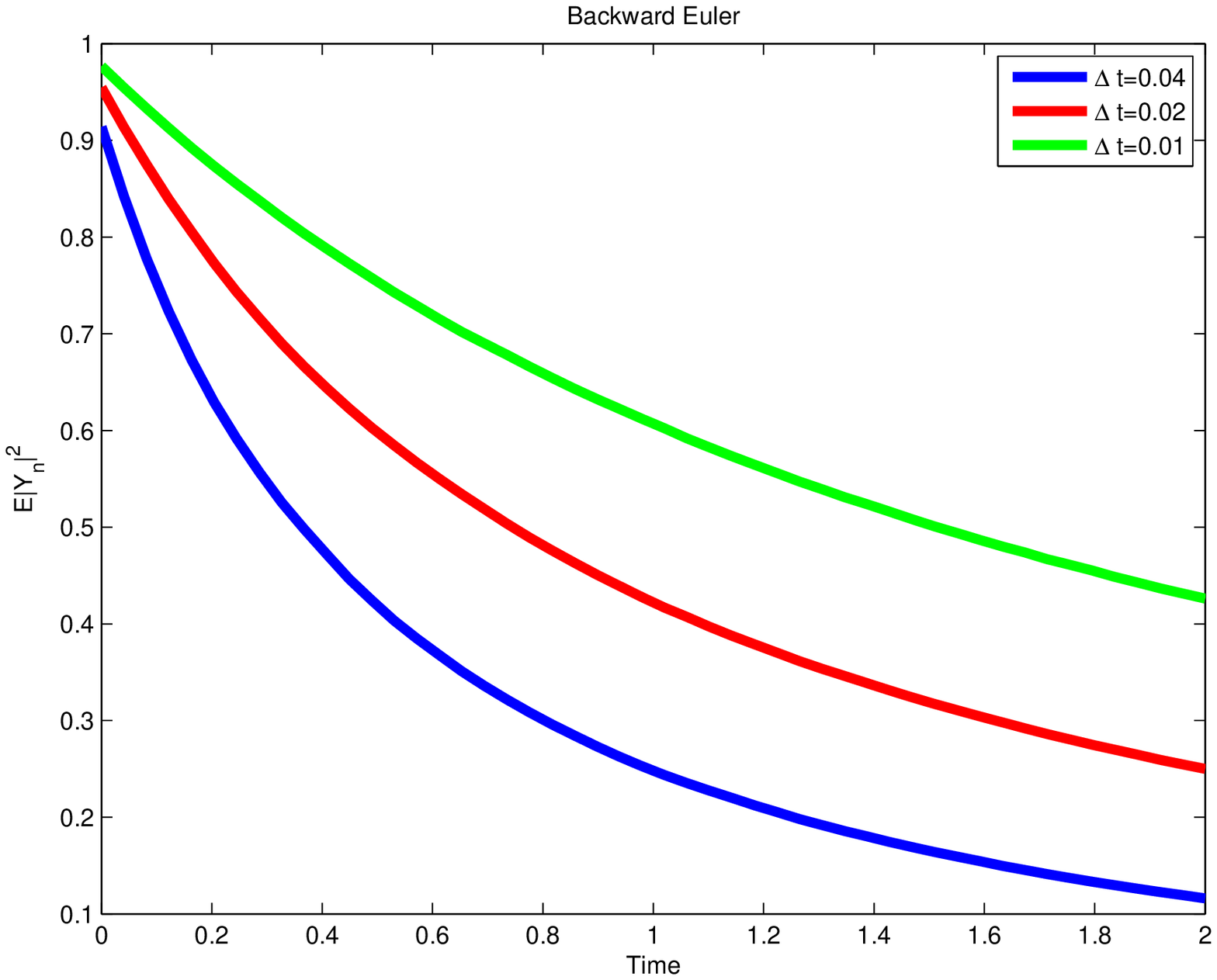}}
\caption{ Nonlinear stability of the tamed scheme (a),  semi-tamed scheme (b),  Split-Step Backward Euler (c) and  Backward Euler (d) for
$\Delta t =0.04, 0.01, 0.01$ and  $3\times 10^3$ samples of each numerical scheme.}
 \label{FIG03}
 \end{center}
\end{figure}
\clearpage
\section*{REFERENCES}

\end{document}